\newcommand{\R}{\mathbb{R}} 
\newcommand{\C}{\mathbb{C}} 
\newcommand{\N}{\mathbb{N}} 
\newcommand{\Z}{\mathbb{Z}} 
\newcommand{\RN}{\mathbb{R}^{N}}
\newcommand{\crit}{{2^{\ast}}}
\newcommand{\crid}{{2^{+}}}
\newcommand{\sqe}{\sqrt{\varepsilon}}
\newcommand{\eps}{\varepsilon}
\newcommand{\dx}{\,dx}
\newcommand{\dy}{\,dy}
\newcommand{\wh}{\widehat}
\newcommand{\wt}{\widetilde}
\newcommand{\weakto}{\rightharpoonup}
\newcommand{\intrn}{\int\limits_{\RN}}
\newcommand{\cR}{\mathcal{R}}
\newcommand{\mR}{\mathbf{R}}
\newcommand{\ce}{\mathcal{C}} 
\newcommand{\ceinfty}{\mathcal{C}^{\infty}} 
\newcommand{\ceinftyc}{\mathcal{C}^{\infty}_{c}} 
\newcommand{\SR}{\mathcal{S}} 
\newcommand{\cS}{\mathcal{S}} 
\newcommand{\SRd}{\mathcal{S}'} 
\newtheorem{theorem}{\\Theorem}[section] 
\newtheorem{remark}[theorem]{\\Remark}
\newtheorem{lemma}[theorem]{\\Lemma}
\newtheorem{proposition}[theorem]{\\Proposition}
\begin{document}

\title{Dual ground state solutions for the critical nonlinear Helmholtz equation}
\author{Gilles Ev\'equoz and Tolga Ye\c{s}il\\  \\ 
{\small Institut f\"ur Mathematik} \\ {\small Goethe-Universit\"at Frankfurt am Main} \\ {\small Germany}}
 \date{}
\maketitle

\begin{abstract}
Using a dual variational approach we obtain nontrivial real-valued solutions of the critical nonlinear Helmholtz equation
$$ - \Delta u - k^{2}u = Q(x)|u|^{\crit - 2}u, \quad u \in W^{2,\crit}(\RN) $$
for $N\geq 4$, where $\crit := \frac{2N}{N-2}$. The coefficient $Q \in L^{\infty}(\RN)\setminus\{0\}$ is assumed to be nonnegative, 
asymptotically periodic and to satisfy a flatness condition at one of its maximum points. The solutions obtained are
so-called {\em dual ground states}, {\it i.e.}, solutions arising from critical points of the dual functional 
with the property of having minimal energy among all nontrivial critical points. Moreover, we show that no dual ground state exists for $N=3$.
\end{abstract}

\section{Introduction}
In this paper, we focus our attention on the existence of nontrivial real-valued solutions of the critical nonlinear
Helmholtz equation
\begin{equation}\label{eqn:critical_helmholtz}
-\Delta u - k^2 u = Q(x)|u|^{p - 2}u, \quad u \in W^{2,p}(\RN)
\end{equation}
for $N \geq 3$, $k\neq 0$, and where $Q \in L^{\infty}(\RN)\setminus\{0\}$ is a nonnegative weight function 
and $p = \crit := \frac{2N}{N-2}$ is the critical Sobolev exponent.
Recently \cite{Evequoz2015}, the existence of solutions to \eqref{eqn:critical_helmholtz} has been proven for all $p$ in the noncritical
interval $\left(\frac{2(N+1)}{N-1}, \frac{2N}{N-2}\right)$. 
Due to the lack of a direct variational approach, since the associated 
energy functional is not well defined in $W^{2,p}(\RN)$, the authors 
considered instead the integral equation
\begin{equation*}\label{eqn:real_integral_equation}
u = \textbf{R}_{k}\left(Q |u|^{p-2}u\right), \quad u \in L^{p}(\RN),
\end{equation*}
where $\textbf{R}_k $ denotes the real part of the resolvent operator of $-\Delta -k^2$. 
A dual variational approach was used,
based on the dual energy functional $J_{Q,p}$ given by
\begin{equation*}\label{dual_functional}
J_{Q,p} (v) = \frac{1}{p'}\intrn |v|^{p'} \,dx - \frac{1}{2}\intrn v \mathbf{A}_{Q,p}v \,dx,\quad v\in L^{p'}(\RN),
\end{equation*}
where $p'=\frac{p}{p-1}$ and $\mathbf{A}_{Q,p} v = Q^{\frac{1}{p}}\mathbf{R}_{k}\left(Q^{\frac{1}{p}}v\right)$.
The functional $J_{Q,p}$ is of class $C^1$ and has the mountain pass geometry. 
Therefore the properties of $Q$ determine whether it satisfies the Palais-Smale condition 
and this in turn is linked in an essential way to compactness properties of 
the Birman-Schwinger type operator $\mathbf{A}_{Q,p}$.
For noncritical $p$, the operator $\mathbf{A}_{Q,p}$ is compact if $Q$ vanishes at infinity, and $J_{Q,p}$
satisfies the Palais-Smale condition. When $Q$ is periodic, this is not the case anymore,
but $\textbf{A}_{Q,p}$ still has some local compactness. In combination with a crucial 
nonvanishing property \cite[Theorem 3.1]{Evequoz2015} of the quadratic form associated with $\mathbf{R}_k$, 
a nontrivial critical point can then be obtained as weak limit after translation of a Palais-Smale sequence 
at the mountain pass level.
The problem \eqref{eqn:critical_helmholtz} becomes more delicate in the critical case $p = \crit$. 
Applying to the differential equation \eqref{eqn:critical_helmholtz} the rescalings
\begin{equation}\label{eqn:rescaling}
u \mapsto u_{r,x_{0}}, \quad \text{where } \quad u_{r,x_{0}}(x) = r^{\frac{N-2}{2}}u(r(x-x_{0})),
\end{equation}
the linear term vanishes as $r \to \infty$ and, since the limit problem
\begin{equation}\label{eqn:limit_pb}
- \Delta u  = Q(x_{0})|u|^{\crit - 2}u \quad \text{in } \RN 
\end{equation}
possesses nontrivial solutions, the local compactness of $\mathbf{A}_{Q}:=\mathbf{A}_{Q,\crit}$ is lost.
In the case where $Q$ vanishes at infinity, the functional $J_{Q}:=J_{Q,\crit}$ therefore does not
satisfy the Palais-Smale condition at every level. 

In analogy to the study of the critical problem \eqref{eqn:critical_helmholtz} on a bounded domain, 
starting with the celebrated work of Br\'ezis and Nirenberg~\cite{Brezis-Nirenberg83},
we shall try to recover some kind of compactness by comparing the mountain pass level $L_Q$ of the 
functional $J_Q$ with the least energy level $L_Q^\ast$ among all possible
limiting problems \eqref{eqn:limit_pb} with $x_0\in\R^N$. From the duality between the Sobolev and 
the Hardy-Littlewood-Sobolev inequalities, it follows that
$$
L_Q^\ast=\frac{S^{\frac{N}2}}{N\|Q\|_\infty^{\frac{N-2}2}},
$$ 
where $S$ denotes the optimal constant in the Sobolev inequality (see Section 3.2 for more details).

The general strategy consists roughly in two steps:
\begin{itemize} 
\item[(I)] show that at every level $0<\beta<L_Q^\ast$, the Palais-Smale condition is satisfied, and 
\item[(II)] establish the strict inequality $L_Q<L_Q^\ast$. 
\end{itemize}
Ambrosetti and Struwe~\cite{Ambrosetti-Struwe86} confirmed that, for the Dirichlet problem on a bounded domain, 
this scheme is also adapted to the dual variational framework. However, whereas the authors in \cite{Ambrosetti-Struwe86} 
reduce the proof of the Palais-Smale condition for the dual functional to the proof of the same property for the direct functional,
we do not have for the problem \eqref{eqn:critical_helmholtz} on $\R^N$ a direct functional at hand. 
In our approach towards the above steps (I) and (II), we choose instead to work directly with the resolvent operators 
for the original and the limit problems, via the corresponding fundamental solutions. More precisely, we start by 
deriving accurate upper and lower bounds on the difference of these fundamental solutions 
(Lemma~\ref{lem:difference_fundsol}). This involves a detailed study of Bessel functions for small arguments. 
Based on these estimates, we then prove a new local compactness property for the difference operator 
$\mathbf{R}_k-\mathbf{R}_0$, where $\mathbf{R}_0=(-\Delta)^{-1}$ (see Proposition~\ref{prop:lokkomp}).
In addition, we show that $\mathbf{R}_k$ remains locally compact in subcritical Lebesgue spaces. Combining these 
properties, the step (I) can be completed in the case where $Q$ vanishes at infinity.

The next step is to prove the strict inequality $L_Q<L_Q^\ast$. There, the lower bound on the difference of the fundamental solutions 
plays a key role. Indeed, it implies that in dimension $N\geq 4$ the quadratic form of the operator $\mathbf{R}_k-\mathbf{R}_0$ is positive
for positive functions supported in sets of small diameter. For such functions, the energy of $J_Q$ can thus be made smaller 
than that of the dual functional associated to \eqref{eqn:limit_pb}. Since we are working with a nonconstant $Q$, an additional requirement 
(see (Q2) in Theorem \ref{thm:main} below) is needed to complete the argument. The condition that we impose controls the way in which $Q$ 
approaches its maximum value $\|Q\|_\infty$. The same condition also appears in several related critical problems, and it seems to go back 
to the work of Escobar~\cite{Escobar87}. Let us mention that Egnell~\cite{Egnell88} provided examples of critical problems on bounded domains 
for which this assumption is necessary. 
More recently, this condition was also used in a paper by Chabrowski and Szulkin~\cite{Chabrowski-Szulkin2002}, 
on a strongly indefinite critical nonlinear Schrödinger equation on $\R^N$ with periodic coefficients.
There, the authors work in a direct variational framework and use generalized linking arguments to show the existence 
of a Palais-Smale sequence at some level. The condition (Q2) is used to prove that this level lies strictly below $L_Q^\ast$. 
A nontrivial critical point is then obtained with the help of Lions' local compactness Lemma~\cite{Lions84} 
(see also \cite[Lemma 1.21]{Willem}). Our approach to treat periodic, 
and more generally asymptotically periodic functions $Q$, is inspired by \cite{Chabrowski-Szulkin2002}, 
although our arguments differ significantly. Working within the dual framework, 
we can simply use the mountain pass theorem without Palais-Smale condition, but we need to show that the nonvanishing property 
for $\mathbf{R}_k$, proven in \cite[Theorem 3.1]{Evequoz2015} for noncritical exponents, continues to hold in the critical case $p=\crit$.

As already pointed out by Br\'ezis and Nirenberg~\cite{Brezis-Nirenberg83}, there is a strong contrast between 
the dimensions $N=3$ and $N\geq 4$, for problems with the critical exponent. In the present case, the estimates on the difference of
the fundamental solutions have the opposite sign for $N=3$, so that $\mathbf{R}_k-\mathbf{R}_0$ acts negatively 
on positive functions. This does not permit to verify Step (II) above, and we show that in fact $L_Q=L_Q^\ast$ holds
for any bounded $Q\geq 0$ in this case. Moreover, we find that the mountain pass level $L_Q$ is not achieved.

As indicated in \cite{Evequoz2015}, every nontrivial critical point $v\in L^{\frac{2N}{N+2}}(\RN)$ of $J_Q$ is related, via the transformation
$$
u=\mathbf{R}_k\bigl(Q^\frac1{\crit}v\bigr),
$$
to a nontrivial strong solution $u\in W^{2,\crit}(\RN)$ of \eqref{eqn:critical_helmholtz} (see Section 3.1 for more details). 
The solutions we obtain in the present paper have the distinctive property that the corresponding critical point of $J_Q$ 
has minimal energy among all nontrivial critical points. 
Following the terminology introduced in a recent paper \cite{Evequoz_ampa2017}, 
we call such solutions {\em dual ground states} of \eqref{eqn:critical_helmholtz} (cf. Section 3.1 for the precise definition). 
The main result in the present paper is the following.
\begin{theorem}\label{thm:main}
Let $N\geq 3$ and consider $Q\in L^\infty(\R^N)\backslash\{0\}$ such that $Q\geq 0$ a.e. in $\R^N$.
\begin{itemize} 
\item[(i)] If $N\geq 4$ and $Q$ satisfies the following conditions,
\begin{itemize}
\item[(Q1)] $Q=Q_{\text{per}}+Q_0$, where $Q_{\text{per}}, Q_0\geq 0$ are such that $Q_{\text{per}}$ is periodic 
and $Q_0(x)\to 0$ as $|x|\to\infty$;
\item[(Q2)] there exists $x_{0} \in \RN$ with $Q(x_{0}) = \max\limits_{\RN}Q$ and, as $|x-x_0|\to 0$,
$$
Q(x_0) - Q(x) = \begin{cases} o(|x - x_{0}|^2),\quad \text{if }N\geq 5, \\ \\ O(|x-x_0|^2), \quad\text{if }N=4,\end{cases} 
$$
\end{itemize}
then, the problem \eqref{eqn:critical_helmholtz} with $p=2^\ast$ has a dual ground state.

\item[(ii)] If $N=3$, no dual ground state exists for \eqref{eqn:critical_helmholtz} with $p=2^\ast$.
\end{itemize}
\end{theorem}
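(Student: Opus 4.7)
The backbone of the proof is the dual variational structure: $J_Q$ is $C^1$ on $L^{p'}(\RN)$ with $p'=\frac{2N}{N+2}$, nontrivial critical points $v$ correspond via $u=\mathbf{R}_k(Q^{1/\crit}v)$ to solutions of \eqref{eqn:critical_helmholtz}, and at any such $v$ one has $J_Q(v)=\tfrac1N\|v\|_{p'}^{p'}$. Computing $\max_{t>0}J_Q(tv)$ along half-lines yields the Nehari-type formula
\begin{equation*}
L_Q=\frac{1}{N}\inf_{v\in L^{p'}(\RN)\setminus\{0\},\,I_Q(v)>0}\Bigl(\frac{\|v\|_{p'}^2}{I_Q(v)}\Bigr)^{N/2},\qquad I_Q(v):=\intrn v\mathbf{A}_Qv\dx,
\end{equation*}
so that the mountain pass level coincides with the infimum of $J_Q$ over its nontrivial critical points; any critical point realizing $L_Q$ is therefore automatically a dual ground state. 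Applied to the limit problem, the formula together with the duality between the Sobolev and Hardy--Littlewood--Sobolev inequalities recovers the stated value $L_Q^\ast=S^{N/2}/(N\|Q\|_\infty^{(N-2)/2})$.

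For part (i), I would follow the Br\'ezis--Nirenberg scheme adapted to the dual framework. For \emph{Step (I)}, split $\mathbf{A}_Q=Q^{1/\crit}(\mathbf{R}_k-\mathbf{R}_0)Q^{1/\crit}+Q^{1/\crit}\mathbf{R}_0 Q^{1/\crit}$: the first summand is locally compact by Proposition~\ref{prop:lokkomp}, while a critical profile-decomposition analysis of the second shows that every concentrating bubble accounts for at least $L_Q^\ast$ of energy, so at any level $\beta<L_Q^\ast$ a bounded PS sequence converges strongly modulo finitely many translations. In the asymptotically periodic case, where even this relaxed PS at $L_Q$ itself may fail, I would invoke the mountain pass theorem without PS, extract a bounded PS sequence, and produce a nontrivial weak limit by means of integer translations absorbing $Q_\text{per}$ and the decay of $Q_0$, combined with the critical analogue of the nonvanishing property of $\mathbf{R}_k$ from \cite[Theorem~3.1]{Evequoz2015}. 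For \emph{Step (II)}, test $J_Q$ with a family $(v_\eps)$ obtained by $L^{p'}$-invariant rescaling of a dual Talenti minimizer for \eqref{eqn:limit_pb} with constant weight $Q(x_0)$, concentrated at $x_0$. Expanding
\begin{equation*}
I_Q(v_\eps)-Q(x_0)^{2/\crit}\intrn v_\eps\mathbf{R}_0 v_\eps\dx
\end{equation*}
as $\eps\to 0$ splits the correction into a Helmholtz part $\int v_\eps(\mathbf{R}_k-\mathbf{R}_0)v_\eps\dx$, strictly positive for $N\geq 4$ by the lower bound of Lemma~\ref{lem:difference_fundsol}, and a coefficient part driven by $Q(x_0)^{2/\crit}-Q^{2/\crit}(x)$, controlled by (Q2). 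For $N\geq 5$ the Helmholtz gain is of order $\eps^2$, dominating the $o(\eps^2)$ flatness error; for $N=4$ it is of order $\eps^2|\log\eps|$ and still beats the $O(\eps^2)$ flatness contribution. Hence $L_Q<L_Q^\ast$, and combined with Step (I) this yields a critical point at level $L_Q$.

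For part (ii), in dimension $N=3$ the lower bound in Lemma~\ref{lem:difference_fundsol} has the opposite sign, so the Helmholtz contribution becomes negative on positive concentrated profiles. The same test-function computation now gives $\max_{t>0}J_Q(tv_\eps)\searrow L_Q^\ast$ as $\eps\to 0$, hence $L_Q\leq L_Q^\ast$. Conversely, a careful sign analysis of $\int v(\mathbf{R}_k-\mathbf{R}_0)v\dx$ together with the critical point identity $\|v\|_{p'}^{p'}=I_Q(v)$ and the dual Sobolev inequality shows that every nontrivial critical point satisfies $J_Q(v)>L_Q^\ast$: equality in the resulting chain of inequalities would force $v$ to be, up to rescaling, a dual Talenti bubble, which the strict negativity of $\mathbf{R}_k-\mathbf{R}_0$ on such a positive profile excludes. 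Thus $L_Q=L_Q^\ast$ is not attained and no dual ground state exists.

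The principal obstacle is the borderline estimate in Step (II) for $N=4$, where (Q2) provides only $O(|x-x_0|^2)$ and one must carefully weigh the $\eps^2$ flatness error against the $\eps^2|\log\eps|$ Helmholtz gain using the precise Bessel-function asymptotics underlying Lemma~\ref{lem:difference_fundsol}. A second delicate point is the extension of the nonvanishing property of $\mathbf{R}_k$ to the critical exponent $p=\crit$, which is what keeps the purely dual mountain pass argument viable in the asymptotically periodic setting without resorting to a direct variational framework as in \cite{Chabrowski-Szulkin2002}.
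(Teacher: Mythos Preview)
Your proposal is correct and follows essentially the same strategy as the paper: the dual mountain pass framework, the Br\'ezis--Nirenberg scheme combining the local compactness of $\mathbf{R}_k-\mathbf{R}_0$ with the critical nonvanishing property, the test-function estimate with cut-off dual Talenti instantons under (Q2), and for $N=3$ the chain of inequalities based on the global bound $|\Psi|\leq\Lambda$ forcing the HLS equality case. Two minor points where the paper is more explicit than your sketch: in Step~(I) it does not run a full profile decomposition but instead argues directly via a localized Palais--Smale analysis (its Lemma~\ref{lem:PS_Folgen} together with the annulus-mass estimate of Lemma~\ref{lem:technisch}) to upgrade weak to local strong convergence below $L_Q^\ast$; and in the genuinely asymptotically periodic case it includes a short comparison step, passing from the translated limit---which is only a critical point of $J_{Q_{\text{per}}}$---back to a critical point of $J_Q$ at level $L_Q$ via the auxiliary function $\wt w=(Q_{\text{per}}/Q)^{1/\crit}w$.
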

Note that in the first part, we do not exclude the cases $Q=Q_0$ and $Q=Q_{\text{per}}$.

\medskip

Let us point out that for radially symmetric $Q$, radial solutions of \eqref{eqn:critical_helmholtz} 
for all $p\in(2,\infty)$ have been obtained in \cite{Evequoz-Weth2014} and very recently in \cite{Mandel2017}, 
where a broad class of nonlinearities is considered.
Up to our knowledge, Theorem~\ref{thm:main} is the first result concerning solutions of 
the nonlinear Helmholtz equation with critical nonlinearity and nonradial $Q$.
Let us also mention that the lower critical case $p=\frac{2(N+1)}{N-1}$ is still open. There, we expect completely
different phenomena than for $p=\crit$. A suitable method therefore needs to be found and we will
address this issue in a forthcoming paper.

We shall now briefly describe the structure of the paper. 
In Section 2, we study the Helmholtz resolvent operator in the Lebesgue space $L^{\crid}(\RN)$, 
where $\crid=\frac{2N}{N+2}$ is the conjugate exponent to $\crit$. 
Recalling first the construction of the fundamental solution of the Helmholtz equation and its asymptotic properties,
we derive in Lemma~\ref{lem:difference_fundsol} new upper and lower bounds on the difference of the latter and the
fundamental solution of Laplace's equation, for small arguments. 
The proof consists in the precise estimation of Bessel functions and their derivatives, 
and the result is of crucial importance for the whole paper.
As a first application, we prove in Proposition~\ref{prop:lokkomp} that the difference 
$\textbf{R}_k-\textbf{R}_0$: $L^{\crid}(\RN)$ $\to$ $L^{\crit}_{\text{loc}}(\RN)$, where $\textbf{R}_{0}$ 
denotes the Laplace resolvent operator, is compact. 
There, we start by decomposing the fundamental solution of the Helmholtz equation
in a similar way as in \cite{Evequoz2015} and then apply the upper bounds obtained in Lemma~\ref{lem:difference_fundsol}.
Another essential property of the Helmholtz resolvent, the nonvanishing property, is established in the case $p = \crit$ in
Theorem~\ref{nonvanishing}. Its proof relies on improvements of previous results from \cite{Evequoz2015} 
by means of the Hardy-Littlewood-Sobolev inequality. After this study of the Helmholtz resolvent, we turn in Section 3 
to the existence of dual ground states of \eqref{eqn:critical_helmholtz} with $p=2^\ast$.
We start by recalling the dual variational framework set up in \cite{Evequoz2015} and the characterization 
of the dual mountain pass level $L_Q$. Using the compactness properties of $\mathbf{R}_k$ and of $\mathbf{R}_k-\mathbf{R}_0$ 
established in Section 2, we then analyze the behaviour of Palais-Smale sequences for $J_Q$ at the level $L_Q$.
Under the assumption $L_Q<L_Q^\ast$, we obtain in Proposition~\ref{prop:PS_unter_kritisch_Level} 
the existence of a nontrivial critical point for $J_Q$ in the case where $Q$ is asymptotically periodic. 
The nonvanishing property plays here a key role in handling the periodic part $Q_{\text{per}}$ 
of the coefficient $Q$. Section 3.3 is then devoted to estimating the dual mountain pass level $L_Q$
under the additional ``flatness condition'' ($Q2$). There, we show that the positive lower bound on the difference of
the fundamental solutions given by Lemma~\ref{lem:difference_fundsol} yields the strict inequality $L_Q<L_Q^\ast$, in the case $N\geq 4$.
Combining the above results, we obtain in Section 3.4 the existence of dual ground states stated in Theorem~\ref{thm:main}.
The paper concludes with the $3$-dimensional case, in which we show that $L_Q=L_Q^\ast$ holds and, by a contradiction argument,
we obtain the nonexistence of dual ground states for \eqref{eqn:critical_helmholtz} with $p=2^\ast$ in this case.

\bigskip

We close this introduction by fixing some notation. 
Throughout the paper we denote by $B_{r}(x)$ the open ball in $\RN$ with radius $r$ and center at $x$. 
Moreover, we set $B_{r} = B_{r}(0)$. The constant $\omega_{N}:=\frac{2\pi^{\frac{N}2}}{N\Gamma(\frac{N}2)}$, 
where $\Gamma$ is the gamma function, represents the volume of the unit ball $B_1$.
By $\mathds{1}_{M}$ we shall denote the characteristic function of 
a measurable set $M \subset \RN$. We write $\SR(\RN)$ for the space of Schwartz functions and 
$\SRd$ for its dual, i.e., the space of tempered distributions. 
Furthermore, we let $\widehat{f}$ or ${\mathcal{F}}(f)$ denote the Fourier transform 
of a function $f \in \SRd$. 
For $1\leq s\leq \infty$, we abbreviate the norm in $L^{s}(\RN)$ by $\left\|\cdot\right\|_{s}$.

\section{The Helmholtz resolvent in the critical case}
\subsection{Fundamental solutions}
Without loss of generality and to simplify formulas, we consider the problem \eqref{eqn:critical_helmholtz} with $k=1$. 
The general case follows by rescaling the independent variable.
 
For $N \geq 3$, the radial outgoing fundamental solution of the Helmholtz equation $-\Delta u-u=\delta_0$ in $\RN$ is given by
\begin{equation}\label{fundsol}
\Phi(x):= \frac{i}{4}(2\pi|x|)^{\frac{2-N}{2}}H^{(1)}_{\frac{N-2}{2}}(|x|), \quad\text{for }x\in\RN\backslash\{0\},
\end{equation}
where $H^{(1)}_{\frac{N-2}{2}}$ denotes the Hankel function of the first kind of order $\frac{N-2}{2}$.
For $f \in \cS(\RN)$ the function $u := \Phi\ast f \in \mathcal{C}^{\infty}(\RN)$ is a solution 
of the inhomogeneous Helmholtz equation $-\Delta u - u = f$ which satisfies the Sommerfeld 
outgoing radiation condition $\partial_ru(x)-iu(x)=o(|x|^{\frac{1-N}{2}})$, as $|x|\to\infty$.
Moreover, it is known (see \cite{Gelfand1964}) that, in the sense of tempered distributions, the Fourier transform of $\Phi$
is given by
\begin{equation}\label{eqn:Fourier_Phi}
\wh{\Phi}(\xi)=(2\pi)^{-\frac{N}2}\frac{1}{|\xi|^2-(1+i0)}:=(2\pi)^{-\frac{N}2}\lim_{\eps\to 0^+}\frac{1}{|\xi|^2-(1+i\eps)}.
\end{equation}

Since we shall be considering real-valued solutions of the Helmholtz equation in the sequel, we turn our attention to 
\begin{equation}\label{eqn:realteil}
\Psi(x):=\text{Re}[\Phi(x)] = -\frac14\bigl(2\pi |x|\bigr)^{\frac{2-N}{2}} Y_{\frac{N-2}{2}}(|x|), \quad\text{for }x\in\RN\backslash\{0\},
\end{equation}
where $Y_{\frac{N-2}{2}}$ denotes the Bessel function of the second kind of order $\frac{N-2}{2}$.
$\Psi$ should be seen as the fundamental solution of the Helmholtz equation associated to real-valued standing waves.

Let us recall some well-known facts concerning Bessel functions of the second kind:
For nonnegative orders $\nu$ and positive 
arguments $t$, the asymptotic behaviour of $Y_\nu(t)$ is given by 
(see \cite[Remark 5.16.2]{Lebedev2012})
\begin{align}
Y_{\nu}(t) &= - \frac{2^{\nu}\Gamma(\nu)}{\pi t^{\nu}}\Bigl(1+O(t)\Bigr), && \text{as } t \to 0, \quad\text{if }\nu>0,\label{gegennull}\\
Y_{0}(t) &= - \frac{2}{\pi}\ln{\frac{2}{t}}+O(1), && \text{as } t \to 0,\label{orderzero}\\
Y_{\nu}(t) &= -\sqrt{\frac{2}{\pi t}}\cos\bigl(t - \frac{(2\nu-1)\pi}{4}\bigr)\Bigl(1+O(t^{-1})\Bigr), && 
\text{as } t \to \infty,\quad\text{for all }\nu\geq 0. \label{unendlich} 
\end{align}
As a consequence, we find that
\begin{equation}\label{asympt:Psi}
\Psi(x)=\begin{cases} \frac1{N(N-2)\omega_N} |x|^{2-N}\bigl(1+O(|x|)\bigr), & \text{ as }|x|\to 0,\\ \\ 
 \frac12(2\pi|x|)^{\frac{1-N}2}\cos\bigl(|x|-\frac{(N-3)\pi}{4}\bigr) \bigl(1+O(|x|^{-1})\bigr), &\text{ as }|x|\to\infty.\end{cases}
\end{equation} 
Denoting by $y_{\nu}$ the first positive zero of $Y_{\nu}$ with $\nu \geq 0$, 
we deduce from the asymptotics \eqref{gegennull} and  \eqref{orderzero}, 
that $Y_{\nu}(t) < 0$ for all $t \in (0,y_{\nu})$ and therefore $\Psi(x)>0$ for all $|x|<y_{\nu}$.

Recalling that for $N\geq 3$ the fundamental solution $\Lambda$ of Laplace's equation in $\R^N$ is given by
\begin{equation}\label{eqn:laplacefundsol}
\Lambda(x)=\frac{1}{N(N-2)\omega_N}|x|^{2-N},\quad\text{for }x\in\RN\backslash\{0\},
\end{equation}
we see from \eqref{asympt:Psi} that $\Psi(x)$ behaves like $\Lambda(x)$ for small $|x|$. Our first result
gives more precise estimates on the way $\Psi(x)$ approaches $\Lambda(x)$ as $|x|\to 0$. In particular,
we observe a strong contrast between the dimension $N=3$ and the higher dimensions $N\geq 4$.
\begin{lemma}\label{lem:difference_fundsol}
Let $r>0$ be given such that $r<y_{\frac{N-4}{2}}$ if $N\geq 4$ and $r<\pi$ if $N=3$.
\begin{enumerate}[(i)]
	
\item There exist $\kappa_{1},\kappa_{2}>0$ only depending on $r$ and $N$,  such that for all $x\in B_r$,
$$
 \left\{\begin{array}{lll}
\kappa_{1}|x|^{4-N} &\leq \Psi(x)-\Lambda(x) \leq \kappa_{2} |x|^{4-N}, & \text{if }N\geq 5,\\ \\
\kappa_{1} \bigl|\ln|x|\, \bigr| &\leq \Psi(x)-\Lambda(x) \leq \kappa_{2} \bigl|\ln|x|\, \bigr|, & \text{if }N=4,\\ \\
-\kappa_{1}|x| &\geq \Psi(x)-\Lambda(x) \geq -\kappa_{2} |x|, & \text{if }N=3.
\end{array}\right.
$$
\item For every multiindex $\alpha \in \N^{N}_{0}$ with $|\alpha| \geq 1$, there exists $\kappa_{3}>0$ 
only depending on $|\alpha|$, $r$ and $N$, such that
\begin{equation}\label{eqn:ableitungfundsol}
\bigl| \partial^{\alpha} \left(\Psi(x) - \Lambda(x)\right) \bigr| 
\leq \kappa_{3} |x|^{4-N-|\alpha|}, \quad \text{for all } x \in B_{r}.
\end{equation}
\end{enumerate}
\end{lemma}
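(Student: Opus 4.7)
\emph{Plan.} The key ingredient is the Bessel-function identity $\frac{d}{dt}[t^{\nu}Y_{\nu}(t)] = t^{\nu}Y_{\nu-1}(t)$ with $\nu := (N-2)/2$, which, combined with \eqref{eqn:realteil} and \eqref{eqn:laplacefundsol}, yields
\begin{equation}\label{intrep}
t^{N-2}\bigl(\Psi(t)-\Lambda(t)\bigr) = -\tfrac{1}{4}(2\pi)^{-\nu}\int_{0}^{t} s^{\nu}Y_{\nu-1}(s)\, ds,
\end{equation}
after integration from $0$ to $t$; the boundary term at $0$ vanishes because \eqref{asympt:Psi} already forces $\Psi(t)-\Lambda(t) = O(t^{3-N})$ as $t\to 0^{+}$. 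From \eqref{gegennull}--\eqref{orderzero} one reads off that $Y_{\nu-1}(s) < 0$ on $(0,y_{\nu-1})$ for $N\geq 4$, while for $N=3$ the closed form $Y_{-1/2}(s) = \sqrt{2/(\pi s)}\sin s$ is strictly positive on $(0,\pi)$. Hence the integrand in \eqref{intrep} has a fixed sign on $(0,y_{(N-4)/2})$ (setting $y_{-1/2}:=\pi$ for $N=3$), which already pins down the sign of $\Psi-\Lambda$ claimed in (i).

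To extract the two-sided quantitative bounds, I would divide \eqref{intrep} by the appropriate reference profile and show that the resulting quotient extends continuously and strictly positively across $t=0$. For $N\geq 5$, set $G(t) := t^{N-4}(\Psi(t)-\Lambda(t))$ and use the leading asymptotic $s^{\nu}Y_{\nu-1}(s) = -\tfrac{2^{\nu-1}\Gamma(\nu-1)}{\pi}s + O(s^{3})$ in \eqref{intrep} to obtain $G(0^{+})=\tfrac{\Gamma(\nu-1)}{16\pi^{\nu+1}}>0$. For $N=4$ (so $\nu=1$), set $G(t) := (\Psi(t)-\Lambda(t))/|\ln t|$ on $(0,1)$ and use $sY_{0}(s)=-\tfrac{2s}{\pi}\ln(2/s)+O(s^{3}\ln s)$ to get $G(0^{+})=\tfrac{1}{8\pi^{2}}>0$; note $r<y_{0}<1$ keeps $|\ln t|$ bounded and bounded away from $0$ on $(0,r]$. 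For $N=3$, the closed form $Y_{1/2}(t)=-\sqrt{2/(\pi t)}\cos t$ reduces \eqref{intrep} to $\Psi(t)-\Lambda(t) = -\tfrac{\sin^{2}(t/2)}{2\pi t}$, so $G(t):=-(\Psi(t)-\Lambda(t))/t = \tfrac{1}{8\pi}\bigl(\sin(t/2)/(t/2)\bigr)^{2}$ is continuous on $[0,\pi)$ with $G(0)=\tfrac{1}{8\pi}$. In every case $G$ is continuous and strictly positive on $[0,y_{(N-4)/2})$, hence bounded between two positive constants $\kappa_{1},\kappa_{2}$ on any compact $[0,r]$ with $r<y_{(N-4)/2}$; this proves (i).

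For (ii), my approach is to combine the full series expansion of $Y_{\nu}$ around $t=0$ (with the logarithmic corrections that appear when $\nu$ is an integer) with the chain-rule formula for derivatives of radial functions. The series representation of $\Psi(x)-\Lambda(x)$ has leading term a constant multiple of $|x|^{4-N}$ (respectively $\ln|x|$ or $|x|$ for $N=4$ or $N=3$), followed by strictly higher-order terms whose derivatives are dominated by $|x|^{6-N-|\alpha|}$ and smaller. Differentiating term by term, the derivatives $\partial^{\alpha}$ of the leading term alone produce the model bound $|x|^{4-N-|\alpha|}$; a direct check confirms that for $|\alpha|\geq 1$ the cases $N=4$ and $N=3$ collapse to the same uniform estimate, since $|\partial^{\alpha}\ln|x|| = O(|x|^{-|\alpha|}) = O(|x|^{4-N-|\alpha|})$ when $N=4$ and similarly $|\partial^{\alpha}|x|| = O(|x|^{1-|\alpha|}) = O(|x|^{4-N-|\alpha|})$ when $N=3$. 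The algebraic heart of the proof, namely \eqref{intrep} together with the sign and leading asymptotics of $Y_{\nu-1}$, is clean; the main obstacle is the careful bookkeeping in the two exceptional low dimensions $N=3,4$, where logarithmic terms or elementary trigonometric cancellations must be tracked by hand, and in part (ii) the verification that the three distinct leading behaviours merge into the single uniform derivative bound $|x|^{4-N-|\alpha|}$ once $|\alpha|\geq 1$.
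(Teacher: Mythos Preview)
Your proof of (i) is correct and is essentially the paper's argument in slightly different packaging. Both rely on the recursion $\frac{d}{dt}[t^\nu Y_\nu]=t^\nu Y_{\nu-1}$ with $\nu=(N-2)/2$: the paper introduces the auxiliary function $\eta_\nu(t):=-c_\nu t^\nu Y_\nu(t)$ (normalised so that $\eta_\nu(0)=1$), writes $\Psi-\Lambda=\Lambda\cdot(\eta_\nu-1)$, and bounds $\eta_\nu(t)-1$ by integrating $\eta_\nu'$; your identity \eqref{intrep} is the same thing in integrated form, and your ``continuous positive quotient'' argument is equivalent to the paper's direct lower and upper bounds on $\eta_\nu'(t)/t$. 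One minor slip: you write that $r<y_0<1$ ``keeps $|\ln t|$ bounded'', but of course $|\ln t|\to\infty$ as $t\to 0^+$; this does not affect your argument, since it is $G$, not $|\ln t|$, that must be bounded between positive constants.

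For (ii) the approaches genuinely differ. You propose termwise differentiation of the series expansion of $Y_\nu$, verifying by hand in the low dimensions $N=3,4$ that the distinct leading profiles ($|x|$, $\ln|x|$, $|x|^{4-N}$) all collapse to the single bound $|x|^{4-N-|\alpha|}$ once $|\alpha|\geq 1$; this is correct but only sketched, and the bookkeeping you flag as ``the main obstacle'' is real. The paper instead invokes the \emph{other} Bessel recursion, $\frac{d}{dt}[t^{-\nu}Y_\nu]=-t^{-\nu}Y_{\nu+1}$, and an induction on $|\alpha|$ to obtain a structural formula
\[
\partial^\alpha(\Psi-\Lambda)(x)=\sum_{\ell=0}^{\lfloor|\alpha|/2\rfloor} f_{|\alpha|-\ell}(|x|)\,P_{|\alpha|-2\ell}(x),
\]
where $P_m$ is a homogeneous polynomial of degree $m$ and each $f_m$ is a constant multiple of $t^{2-N-2m}\bigl[\eta_{\frac{N-2}{2}+m}(t)-1\bigr]$. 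The key observation is that for $|\alpha|\geq 1$ every order $\frac{N-2}{2}+m$ appearing is strictly greater than $1$ (even when $N=3$), so the quadratic bound $\eta_\mu(t)-1=O(t^2)$ from part (i) applies uniformly and no dimension-by-dimension analysis is needed. The paper's route is cleaner and recycles the work from (i) directly; yours is more elementary in conception but heavier in execution.
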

\begin{proof}
We start by considering for $\nu\geq 1$ the function $\eta_\nu$: $[0,\infty)$ $\to$ $\R$ given by
$$
\eta_\nu(t):=\begin{cases}-c_\nu t^\nu Y_\nu(t), & t>0, \\ 1, & t=0,\end{cases} 
\quad\text{where }c_\nu=\frac{\pi}{2^\nu\Gamma(\nu)}.
$$
Remark that $\eta_\nu$ is continuous, as a consequence of \eqref{gegennull} and since $Y_\nu$ is analytic on $(0,\infty)$. 
In addition, for $t>0$, the recursion formula $\frac{d}{dt}\left[t^{\nu}Y_{\nu}(t)\right] = t^{\nu}Y_{\nu -1}(t)$ 
(see \cite[p.105]{Lebedev2012}) gives
$$
\eta_\nu'(t)=-c_\nu t^\nu Y_{\nu-1}(t).
$$
Hence, $\eta_\nu$ is strictly increasing in the interval $(0,y_{\nu-1})$ and in particular $\eta_\nu>1$ in this interval.
Moreover, using the asymptotic expansions for small arguments \eqref{gegennull} and \eqref{orderzero}, we see that
\begin{align*}
&\lim_{t\to 0^+}\frac{\eta_\nu'(t)}{t}=-c_\nu \lim_{t\to 0^+} t^{\nu-1}Y_{\nu-1}(t)=\frac1{2(\nu-1)}, \quad\text{if }\nu>1,\\
\text{and}\quad &\lim_{t\to 0^+}\frac{\eta_1'(t)}{t|\ln t|}=-\frac{\pi}{2} \lim_{t\to 0^+}\frac{Y_{0}(t)}{-\ln t}=1.
\end{align*}
Therefore, given $0<r<y_{\nu-1}$ and since $y_0<1$, there exist constants 
$\kappa_1'=\kappa_1'(\nu,r)$ and $\kappa_2'(\nu,r)$ such that
\begin{align*}
\frac{\eta_\nu'(t)}{t}\geq 2\kappa_1', \quad\text{ if }\nu>1, \quad
&\text{ and }\quad \frac{\eta_1'(t)}{t|\ln t|}\geq 2\kappa_1', \quad\text{for all }0<t<r,  \\
\Bigl|\frac{\eta_\nu'(t)}{t}\Bigr|\leq 2\kappa_2', \quad\text{ if }\nu>1, \quad
&\text{ and }\quad \Bigl|\frac{\eta_1'(t)}{t\ln t}\Bigr|\leq 2\kappa_2', \quad\text{for all }0<t<r.
\end{align*}
Writing
\begin{align*}
\frac{\eta_\nu(t)-1}{t^2}=\int_0^1 s\frac{\eta_\nu'(st)}{st}\, ds 
\quad\text{ and }\quad \frac{\eta_1(t)-1}{t^2|\ln t|}=\int_0^1 s\left|\frac{\ln(st)}{\ln t}\right|\frac{\eta_1'(st)}{st|\ln(st)|}\, ds,
\end{align*}
we obtain the bounds
\begin{align}
\eta_\nu(t)-1\geq \kappa_1' t^2,\quad\text{ if }\nu>1, \quad
&\text{ and }\quad \eta_1(t)-1\geq \kappa_1't^2|\ln t|, \quad\text{for all }0<t<r,  \label{eqn:eta_below} \\
\bigl|\eta_\nu(t)-1\bigr|\leq \kappa_2' t^2,\quad\text{ if }\nu>1, \quad
&\text{ and }\quad \bigl|\eta_1(t)-1\bigr|\leq \kappa_2''t^2|\ln t|, \quad\text{for all }0<t<r, \label{eqn:eta_above}
\end{align}
with some $\kappa_2''=\kappa_2'(r)>0$.
The assertion (i) in case $N\geq 4$ follows from \eqref{eqn:eta_below} and \eqref{eqn:eta_above}, since we have
$$
\Psi(x)-\Lambda(x)=\Lambda(x)\bigl(\eta_{\frac{N-2}{2}}(|x|)-1\bigr)
=\frac1{N(N-2)\omega_N}|x|^{2-N}\bigl(\eta_{\frac{N-2}{2}}(|x|)-1\bigr).
$$
In the case $N=3$, we have
\begin{equation}\label{eqn:Ngleich3}
\Psi(x)-\Lambda(x)=\frac1{4\pi|x|}\bigl(\cos|x|-1\bigr).
\end{equation}
Remark that
$$
\frac{\cos t -1}{t^2}=-\int_0^1 s \frac{\sin(st)}{st}\, ds, 
\quad\text{where }t\mapsto \frac{\sin t}{t}\text{ is decreasing in }[0,\pi],\ \lim_{t\to 0}\frac{\sin t}{t}=1,
$$
and $\bigl|\frac{\sin t}{t}\bigr|\leq 1$ for all $t>0$.
We thus conclude that for given $0<r_0<\pi$ there is a constant $\kappa_1=\kappa_1(r_0)>0$ such that
$$
\frac{\cos t -1}{t^2}\leq -\kappa_1, \quad\text{ for all }0<t<r_0,
\quad\text{ and }\quad \frac{\cos t -1}{t^2}\geq -\frac12,\quad\text{ for all }t>0.
$$
Plugging these estimates in \eqref{eqn:Ngleich3} yields the assertion (i) for $N=3$ with $\kappa_2=\frac1{8\pi}$.

To prove the assertion (ii), we notice that for $\alpha\in\N_0^N$ and $k=|\alpha|$, 
an induction argument based on the recursion formula
$\frac{d}{dt}\left[t^{-\nu}Y_{\nu}(t)\right] = -t^{-\nu}Y_{\nu +1}(t)$ (see \cite[p.105]{Lebedev2012})
gives
$$
\partial^\alpha(\Psi(x)-\Lambda(x)) = \sum_{\ell=0}^{\lfloor\frac{k}{2}\rfloor}f_{k-\ell}(|x|) P_{k-2\ell}(x),
$$
where for $m\in\N_0$, $P_m(x)$ is a homogeneous polynomial of degree $m$ and where
$$
f_m(t)=(-1)^m\frac{2^m\Gamma(\frac{N-2}{2}+m)}{\Gamma(\frac{N-2}{2})}
\frac{t^{2-N-2m}}{N(N-2)\omega_N}\Bigl[\eta_{\frac{N-2}{2}+m}(t)-1\Bigr], 
\quad t>0.
$$
As a consequence, given $r>0$, there is a constant $\gamma=\gamma(N,k,r)>0$ such that
$$
\bigl|\partial^\alpha(\Psi(x)-\Lambda(x))\bigr| 
\leq \gamma|x|^{2-N-k}\sum_{\ell=0}^{\lfloor\frac{k}{2}\rfloor}\Bigl[\eta_{\frac{N-2}{2}+k-\ell}(|x|)-1\Bigr],
\quad\text{for all }|x|<r.
$$
Using \eqref{eqn:eta_above} and remarking that $\frac{N-2}{2}+k-\ell>1$ for $k\geq 1$ 
and $0\leq \ell\leq \lfloor\frac{k}{2}\rfloor$, 
we obtain the desired assertion and the lemma is proven.
\end{proof}

\subsection{Compactness properties}\label{sec:compact}
Here, and in the next section, we discuss properties of the resolvent Helmholtz operator $\textbf{R}:=\textbf{R}_{1}$ given
by the convolution $f\mapsto \Psi\ast f$ for $f\in\cS(\RN)$, where $\Psi$ is given in \eqref{eqn:realteil}. 
Let us first remark that as a consequence of an estimate of Kenig, Ruiz and Sogge \cite[Theorem 2.3]{kenig1987},
this mapping extends as a continuous linear operator
$$
\mathbf{R}\ :\ L^{\crid}(\RN)\ \to\ L^{\crit}(\RN).
$$
In particular, there exists a constant $C_0>0$ only depending on $N$ such that
\begin{equation}\label{eqn:c0}
\|\mathbf{R}v\|_{\crit}\leq C_0\|v\|_{\crid}, \quad\text{for all }v\in L^{\crid}(\RN).
\end{equation}
Let us denote by
$$
\mathbf{R}_0\ :\ L^{\crid}(\RN)\ \to\ L^{\crit}(\RN)
$$
the linear operator given by the convolution with the fundamental solution of Laplace's equation
$$
\mathbf{R}_0v:=\Lambda\ast v, \quad v\in L^{\crid}(\RN).
$$
Notice that $\mathbf{R}_0$ is well defined and continuous, as a consequence 
of the weak Young inequality~\cite[p.~107]{Lieb_Loss2001}.

\begin{remark}
The results in this and the next sections are stated and proven for the real part $\mathbf{R}$ of the resolvent, but
they remain valid for the full resolvent $\cR$: $L^{\crid}(\RN)$ $\to$ $L^{\crit}(\RN)$ 
which is the extension of the convolution map $f\mapsto \Phi\ast f$, $f\in\cS(\RN,\C)$.
\end{remark}
\begin{lemma}\label{lem:AQsubkritischkompakt}
For all $1 \leq t < \crit$ and all $r>0$ the operator 
$\mathds{1}_{B_{r}}\mathbf{R}$ : $L^{\crid}(\RN)$ $\rightarrow$ $L^{t}(\RN)$ is compact.
\end{lemma}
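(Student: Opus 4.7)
The plan is to exploit the fact that $g := \mathbf{R}f$ solves the Helmholtz equation $-\Delta g - g = f$ distributionally on $\RN$, and then to apply interior elliptic regularity to gain two derivatives locally, so that the Rellich-Kondrachov theorem delivers compactness into every subcritical Lebesgue space on $B_r$.

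First, I would verify the distributional equation. For $f \in \cS(\RN)$ this is essentially contained in the discussion around \eqref{fundsol}: the function $\Phi \ast f$ solves the complex Helmholtz equation pointwise, and taking real parts yields $-\Delta(\Psi \ast f) - \Psi \ast f = f$ for real Schwartz $f$ (splitting into real and imaginary parts handles the complex case). Since $\cS(\RN)$ is dense in $L^{\crid}(\RN)$ and $\mathbf{R}: L^{\crid}(\RN) \to L^{\crit}(\RN)$ is continuous by \eqref{eqn:c0}, the identity passes to the limit in $\SRd$ for every $f \in L^{\crid}(\RN)$.

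Next, fix $r < R' < R < \infty$ and $f \in L^{\crid}(\RN)$. Because $B_R$ has finite measure, $g \in L^{\crit}(B_R) \hookrightarrow L^{\crid}(B_R)$ by H\"older, hence $-\Delta g = f+g \in L^{\crid}(B_R)$ with norm controlled by $\|f\|_{\crid}$. Interior Calder\'on-Zygmund estimates then give
$$
\|g\|_{W^{2,\crid}(B_{R'})} \le C\bigl(\|f\|_{L^{\crid}(B_R)} + \|g\|_{L^{\crid}(B_R)}\bigr) \le C'\|f\|_{\crid}.
$$
Since $2\,\crid = \tfrac{4N}{N+2} < N$ for every $N \geq 3$, a direct computation shows that the second-order Sobolev exponent of $W^{2,\crid}$ is precisely $\crit$. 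Rellich-Kondrachov therefore yields that $W^{2,\crid}(B_{R'}) \embed L^t(B_{R'})$ is compact for every $1 \leq t < \crit$, and restricting to $B_r$ preserves compactness. Composing with the bounded linear map $f \mapsto g|_{B_{R'}}$ produces the claimed compactness of $\mathds{1}_{B_r}\mathbf{R}: L^{\crid}(\RN) \to L^t(\RN)$. I expect the only mildly delicate point to be the justification of the distributional equation for general $f \in L^{\crid}(\RN)$; the remainder of the argument is a textbook elliptic bootstrap.
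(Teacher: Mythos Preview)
Your argument is correct and follows essentially the same route as the paper: the paper simply cites an elliptic estimate (\cite[Proposition A.1]{Evequoz2015}) to obtain $\|\mathbf{R}v\|_{W^{2,\crid}(B_r)}\leq D_r\|v\|_{\crid}$ and then invokes the compact embedding $W^{2,\crid}(B_r)\hookrightarrow L^t(B_r)$ for $t<\crit$, whereas you spell out the interior Calder\'on--Zygmund step and the Sobolev exponent computation explicitly. The only difference is the level of detail, not the strategy.
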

\begin{proof}
By elliptic estimates (see \cite[Proposition A.1]{Evequoz2015}), we can find for every $r>0$ a constant $D_r>0$ such that 
$\|\textbf{R}v\|_{W^{2,\crid}(B_r)}\leq D_r \|v\|_{\crid}$ for all $v \in L^{\crid}(\RN)$.
Since the embedding $W^{2,\crid}(B_r) \hookrightarrow L^{t}(B_r)$ is compact for all $1 \leq t < \crit$, and all $r>0$,
we deduce that the operator $ \mathds{1}_{B_{r}}\textbf{R} : L^{\crid}(\RN) \rightarrow L^{t}(\RN)$ 
is compact for all $1 \leq t < \crit$ and all $r>0$.
\end{proof}
\begin{proposition}\label{prop:lokkomp}
	~
\begin{itemize}
	\item[(i)] The difference  $(\mathbf{R}-\mathbf{R}_0)$ is a continuous linear mapping from $ L^{\crid}(\RN)$ 
	into $ W^{2,\crit}(\RN) $
	\item[(ii)]	
For all $r>0$, the operator \
$\mathds{1}_{B_r}(\mathbf{R}-\mathbf{R}_0)$: $L^{\crid}(\RN)$ $\to$ $L^{\crit}(\RN)$ \
is compact.
	\end{itemize}
\end{proposition}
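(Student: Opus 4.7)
My plan has two parts, mirroring the statement.

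For Part (i), the key observation is the distributional identity
$$-\Delta\bigl[(\mathbf{R} - \mathbf{R}_0)v\bigr] = \mathbf{R}v \quad\text{in } \cS'(\RN), \quad v\in L^{\crid}(\RN).$$
This follows because $\Psi$ and $\Lambda$ are fundamental solutions of $-\Delta - 1$ and $-\Delta$ respectively, so $(-\Delta - 1)\mathbf{R}v = v = -\Delta\mathbf{R}_0 v$ for $v\in\cS(\RN)$; subtracting and using the continuity of $\mathbf{R}$ and $\mathbf{R}_0$ from $L^{\crid}(\RN)$ to $L^{\crit}(\RN)$ (via the Kenig--Ruiz--Sogge estimate \eqref{eqn:c0} and the Hardy--Littlewood--Sobolev inequality respectively), the identity extends to $v\in L^{\crid}(\RN)$ by density of $\cS(\RN)$ in $L^{\crid}(\RN)$. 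Setting $u := (\mathbf{R} - \mathbf{R}_0)v$, both $u$ and $-\Delta u = \mathbf{R}v$ then lie in $L^{\crit}(\RN)$ with norms controlled by $\|v\|_{\crid}$. Since $1 < \crit < \infty$, the $L^p$-boundedness of the Riesz transforms (Calder\'on--Zygmund theory) yields $u\in W^{2,\crit}(\RN)$ with $\|u\|_{W^{2,\crit}(\RN)} \leq C\|v\|_{\crid}$.

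For Part (ii), I combine Part (i) with Rellich--Kondrachov. If $(v_n)$ is bounded in $L^{\crid}(\RN)$, then $\{(\mathbf{R} - \mathbf{R}_0)v_n\}$ is bounded in $W^{2,\crit}(\RN)$, hence, after restriction to $B_r$, bounded in $W^{2,\crit}(B_r)$, which embeds compactly into $L^{\crit}(B_r)$. A convergent subsequence in $L^{\crit}(B_r)$ yields, after extending by zero outside $B_r$, a convergent subsequence of $\mathds{1}_{B_r}(\mathbf{R} - \mathbf{R}_0)v_n$ in $L^{\crit}(\RN)$.

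The only delicate points are justifying the distributional identity for general $v\in L^{\crid}(\RN)$ (routine by $\cS$-density and $\cS'$-continuity of $-\Delta$) and passing from $u,\Delta u\in L^{\crit}$ to $u\in W^{2,\crit}$, which is standard via the characterization $W^{2,p}(\RN) = (1-\Delta)^{-1}(L^p)$ for $1<p<\infty$. As announced in the introduction, the authors' intended route instead decomposes $\Psi$ into a locally singular piece and a smoother remainder and invokes the pointwise upper bounds of Lemma~\ref{lem:difference_fundsol}(i)--(ii) directly in the convolution; my approach bypasses Lemma~\ref{lem:difference_fundsol} for this proposition, exploiting instead the fact that $-\Delta$ is an order-$2$ Fourier multiplier, so that the two-derivative gain is read off directly from the equation satisfied by $u$. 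Lemma~\ref{lem:difference_fundsol} remains essential later, in its lower-bound form, for Step~(II) of the mountain-pass strategy.
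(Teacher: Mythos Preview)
Your proof is correct and takes a genuinely different route from the paper's.

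The paper proceeds by kernel analysis: it introduces the splitting $\Psi=\Psi_1+\Psi_2$ with $\widehat{\Psi_1}$ compactly supported near the unit sphere, shows that convolution with $\partial^\alpha\Psi_1$ is bounded $L^{\crid}\to L^{\crit}$ for every $\alpha$ (via Young and \eqref{eqn:c0}), and then uses the pointwise upper bounds of Lemma~\ref{lem:difference_fundsol} on $\partial^\alpha(\Psi_2-\Lambda)$ for $|\alpha|\leq 2$ to place these derivatives in weak $L^{\frac{N}{N-2}}$, so that weak Young gives boundedness $L^{\crid}\to L^{\crit}$; summing yields the $W^{2,\crit}$ bound. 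You instead read off the identity $-\Delta[(\mathbf{R}-\mathbf{R}_0)v]=\mathbf{R}v$ from the defining equations of the two fundamental solutions, obtain $u,\Delta u\in L^{\crit}$ directly from \eqref{eqn:c0} and the HLS inequality, and invoke the Calder\'on characterization $W^{2,p}(\RN)=(1-\Delta)^{-1}L^p(\RN)$ for $1<p<\infty$ to conclude. Your argument is considerably shorter and does not use Lemma~\ref{lem:difference_fundsol} at all for this proposition; in particular it renders part (ii) of that lemma and the upper bounds in part (i) unnecessary here. What the paper's route buys is that the decomposition $\Psi=\Psi_1+\Psi_2$ (and the estimate \eqref{eqn:psi2}) is set up once and then reused verbatim in the proof of the nonvanishing Theorem~\ref{nonvanishing} and underlies Lemma~\ref{lem:interact}; the kernel-level viewpoint is also the natural one for the lower-bound argument in Proposition~\ref{prop:MP_level}. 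Both proofs finish (ii) the same way, via Rellich--Kondrachov on $W^{2,\crit}(B_r)\hookrightarrow L^{\crit}(B_r)$, which is indeed compact since $\crit<\frac{2N}{(N-6)_+}$.
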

\begin{proof}
In the sequel, for $\mu\in\R$, $C$ and $C_\mu$ shall denote constants depending on $N$ and on 
$N, \mu$ respectively, but which may change from line to line.

To prove (i) we shall use a decomposition of $\Psi$, 
similar to the one introduced in \cite[Section 3]{Evequoz2015} for $\Phi$. 
We fix a radial $\psi \in \SR(\RN)$ such that $\widehat{\psi} \in \ceinftyc(\RN)$, $0 \leq \widehat{\psi}\leq 1$, 
$\widehat{\psi}(\xi) = 1$ on for $\bigl| |\xi| - 1 \bigr| \leq \frac{1}{6}$ and $\widehat{\Psi}(\xi) = 0$ for 
$\bigl| |\xi| - 1 | \geq \frac{1}{4}$. 
Write $\Psi = \Psi_{1} + \Psi_{2}$ with
\begin{equation}\label{eqn:decomp_Psi}
\Psi_{1}:= (2\pi)^{-\frac{N}{2}} (\Psi \ast \psi), \qquad \Psi_{2} = \Psi - \Psi_{1}.
\end{equation}
Then, for every $f\in\cS(\RN)$ and $\alpha\in\N^N_0$, the properties of the convolution of Schwartz functions with
a tempered distribution (see \cite[Theorem 7.19]{Rudin_FA}) allow to write
$$
(\partial^\alpha\Psi_1)\ast f =(2\pi)^{-\frac{N}{2}}  [\Psi\ast(\partial^\alpha\psi)]\ast f
=(2\pi)^{-\frac{N}{2}} \Psi\ast[(\partial^\alpha\psi)\ast f],
$$
where $\partial^\alpha\psi\in\cS(\RN)$. Hence, from \eqref{eqn:c0} and Young's inequality for the convolution,
we obtain the estimate
$$
\left\|(\partial^\alpha\Psi_1)\ast f\right\|_{\crit}=(2\pi)^{-\frac{N}{2}} \left\|\Psi\ast[(\partial^\alpha\psi)\ast f]\right\|_{\crit}
\leq(2\pi)^{-\frac{N}{2}}  C_0 \|\partial^\alpha\psi\|_1 \|f\|_{\crid}, \quad\text{for all }f\in\cS(\RN).
$$
As a consequence, the convolution
$f \mapsto (\partial^{\alpha}\Psi_{1})\ast f$, $f\in\cS(\RN)$, 
extends as a continuous map from $L^{\crid}(\RN)$ into $L^{\crit}(\RN)$ for every $\alpha \in \N^{N}_{0}$.

Turning to $\Psi_2$, we have by definition $\wh{\Psi_{2}} = (1- \wh{\psi})\wh{\Psi} $ and, since 
taking real parts in \eqref{eqn:Fourier_Phi} yields 
$$
\wh{\Psi_2}(\xi) = (2\pi)^{-\frac{N}2}\lim\limits_{\eps \to 0^+}\frac{|\xi|^2-1}{(|\xi|^2 - 1)^2 +\eps^2}\left(1-\wh{\psi}(x)\right)
=(2\pi)^{-\frac{N}2}\frac{\left(1-\wh{\psi}(x)\right)}{|\xi|^2 - 1},
$$
we get $\wh{\Psi_{2}} \in \ceinfty(\RN)$ and $\wh{\Psi_{2}}(\xi) =(2\pi)^{-\frac{N}2} (|\xi|^2 - 1)^{-1}$ for $|\xi|\geq \frac{5}{4}$. 
This gives $\partial^{\beta}\widehat{\Psi_{2}} \in L^{1}(\RN)$ for all $\beta \in \N^{N}_{0}$ such that $2 + |\beta| > N$. 
Therefore, using standard differentiation properties of the Fourier transform, 
the fact that $\widehat{\Psi_{2}}$ (and so $\Psi_{2}$) is radial 
and that ${\mathcal{F}}(f)(\xi) = {\mathcal{F}}^{-1}(f)(-\xi)$, we obtain
$$ 
\Bigl\| |\cdot|^{|\beta|}\Psi_{2}\Bigr\|_{\infty}
= \left\|{\cal{F}}\left(\partial^{\beta}\widehat{\Psi_{2}}\right)\right\|_{\infty} 
\leq \left\|\partial^{\beta}\widehat{\Psi_{2}}\right\|_{L^{1}(\RN)} 
\leq C_{|\beta|}, \quad \text{for all } |\beta|>N-2. 
$$
Choosing $\beta\in\N_0^N$ with $|\beta|=N$, we obtain that
\begin{equation}\label{eqn:psi2}
|\Psi_{2}(x)| \leq C|x|^{-N}, \qquad  \text{ for all } x\in\RN.
\end{equation}
Using the same argument with $\partial^{\alpha}\Psi_{2}$ in place of $\Psi_{2}$, for every $\alpha \in \N^{N}_{0}$, we get
\begin{equation}\label{eqn:psi2ableitung}
|\partial^{\alpha}\Psi_{2}(x)| \leq C_{|\alpha|}|x|^{-N-|\alpha|}, \quad \text{for all } x \in \RN 
\text{ and all } \alpha \in \N^{N}_{0}.
\end{equation}
From Lemma \ref{lem:difference_fundsol}, we obtain estimates on $\partial^{\alpha}(\Psi_{2} - \Lambda)(x)$ for $|x|$ small. 
For large values of $|x|$, we use \eqref{eqn:psi2ableitung} and  $|\partial^{\alpha}\Lambda(x)| \leq C_{|\alpha|}|x|^{2-N-|\alpha|}$, 
which follows easily from \eqref{eqn:laplacefundsol}. Altogether, we get for $\alpha\in\N^N_0$ and $x\in\RN$,
\begin{equation}
	|\partial^{\alpha}(\Psi_{2} - \Lambda)(x)| \leq 
	\begin{cases}
	C_{|\alpha|}\min \{ |x|^{4-N-|\alpha|},|x|^{2-N-|\alpha|} \}, \quad & N=3,\ N\geq 5,\text{ or }N=4\text{ and }|\alpha|\geq 1, \\
	C \min \{ 1+\bigl|\ln|x|\bigr|, |x|^{-2} \}, \quad &N=4\text{ and }|\alpha|=0.
	\end{cases}
\end{equation}
As a consequence, denoting by $L^{\frac{N}{N-2}}_{w}(\RN)$ the weak-$L^{\frac{N}{N-2}}$ space, we infer that
$$ 
\partial^{\alpha}(\Psi_{2} - \Lambda) \in L^{\frac{N}{N-2}}_{w}(\RN), \quad 
\text{for all } \alpha \in \N^{N}_{0} \text{ such that } |\alpha|\leq 2.
$$
From the weak Young inequality, the convolution 
$f \mapsto \partial^\alpha(\Psi_{2} - \Lambda)\ast f$, $f\in\cS(\RN)$, 
extends as a continuous map from $L^{\crid}(\RN)$ into $L^{\crit}(\RN)$ for such $\alpha$.
Summarizing and using the fact that
$$
\left\|(\Psi-\Lambda)\ast f\right\|_{W^{2,\crit}}^2
\leq 2\sum\limits_{|\alpha|\leq 2} \left\| \partial^{\alpha}(\Psi_{2} - \Lambda)\ast f\right\|^{2}_{\crit}  
+ 2\sum\limits_{|\alpha|\leq 2} \left\| (\partial^{\alpha}\Psi_{1})\ast f\right\|^{2}_{\crit}, \quad f\in\cS(\RN),
$$
we obtain that the convolution $f \mapsto (\Psi - \Lambda)\ast f$ 
extends as a continuous map from $L^{\crid}(\RN)$ into $W^{2,\crit}(\RN)$.	 
Therefore, the operator
$$ 
\textbf{R}-\textbf{R}_{0} :  L^{\crid}(\RN)\rightarrow W^{2,\crit}(\RN)
$$
is continuous and (i) is proven.\\
\noindent
By the Rellich-Kondrachov Theorem, the embedding $W^{2,\crit}_{loc}(\RN) \hookrightarrow L^{t}_{loc}(\RN)$ is compact 
for all $1 \leq t < \frac{2N}{(N-6)_{+}}$.
Thus, we obtain the compactness of 
$\mathds{1}_{B_{r}}(\mathbf{R}-\mathbf{R}_0)$ : $L^{\crid}(\RN)$ $\to$ $L^{\crit}(\RN)$ for all $r> 0$, which proves (ii).
\end{proof}

\subsection{Nonvanishing property and related estimates}
As a key ingredient for the existence result in Section~\ref{sec:dual} below, 
we prove that the nonvanishing property of the quadratic form associated with the Helmholtz resolvent
holds true in the space $L^{p'}(\RN)$ with $p=2^\ast$. This property has been proved in \cite[Theorem 3.1]{Evequoz2015} 
in the noncritical range $\frac{2(N+1)}{N-1}<p<2^\ast$.

\begin{theorem}\label{nonvanishing}
Consider a bounded sequence $(v_n)_n\subset L^{2^+}(\R^N)$
satisfying 
\begin{equation}\label{eqn:hyp_nonvanish}
\limsup \limits_{n\to\infty}\left|\int_{\R^N}v_n\mathbf{R} v_n\, dx\right|>0.
\end{equation}
Then there exists $R>0$, $\zeta>0$ and a sequence
$(x_n)_n\subset\R^N$ such that, up to a subsequence, 
\begin{equation}
 \int_{B_R(x_n)}|v_n|^{2^+}\, dx \geq \zeta,\quad\text{for all }n.
\end{equation}
\end{theorem}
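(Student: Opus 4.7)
My plan is to argue by contradiction: assume $(v_n)\subset L^{\crid}(\R^N)$ is bounded and vanishes in the sense of Lions, meaning
\[
\lim_{n\to\infty}\sup_{y\in\R^N}\int_{B_R(y)}|v_n|^{\crid}\,dx = 0\qquad\text{for every }R>0,
\]
and to deduce that $\int_{\R^N} v_n\mathbf{R}v_n\,dx\to 0$, contradicting \eqref{eqn:hyp_nonvanish}. This adapts the subcritical argument from \cite[Theorem~3.1]{Evequoz2015}, with the Hardy--Littlewood--Sobolev inequality taking over the role that was played there by the subcritical Sobolev embeddings (which are no longer at our disposal when $p=\crit$).

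The backbone is the decomposition $\Psi=\Psi_1+\Psi_2$ already introduced in the proof of Proposition~\ref{prop:lokkomp}, so that $\widehat{\Psi_1}$ is supported in a compact annulus around the unit sphere, while $\Psi_2$ is smooth away from the origin with pointwise control $|\Psi_2(x)|\le C\min(|x|^{2-N},|x|^{-N})$. Writing $\mathbf{R}=\mathbf{R}_1+\mathbf{R}_2$ accordingly, I would handle the two bilinear pairings $\int v_n\mathbf{R}_jv_n\,dx$ independently.

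For the $\mathbf{R}_2$ piece the decay/singularity bounds place $\Psi_2$ in $L^{N/(N-2)}_{w}(\R^N)$, so the weak Young inequality (equivalently HLS) gives a continuous map $\mathbf{R}_2\colon L^{\crid}\to L^{\crit}$. To turn this boundedness into smallness I would split the convolution into short-range ($|x-y|\le T$) and long-range ($|x-y|>T$) parts. The long-range part is dominated by the $|x|^{-N}$ tail of $\Psi_2$ and Hölder inequality in $\crid$, and is made small by choosing $T$ large. The short-range part is treated by covering $\R^N$ with a locally finite family of balls $\{B_T(y_k)\}$ and applying HLS on each ball: the vanishing hypothesis makes each local $\crid$-mass of $v_n$ arbitrarily small, and summing with bounded overlaps yields $\int v_n\mathbf{R}_2v_n\,dx\to 0$.

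For the $\mathbf{R}_1$ piece the analysis is Fourier-theoretic: by Parseval, $\int v_n\mathbf{R}_1v_n\,dx$ amounts to testing $|\widehat{v_n}|^2$ against a principal-value distribution concentrated on $\mathbb{S}^{N-1}$, i.e.\ to a Stein--Tomas restriction regime. In the subcritical situation of \cite{Evequoz2015} one can relate the sphere restriction to a subcritical $L^p_{\mathrm{loc}}$ norm and close the loop with vanishing; for $p=\crit$ the natural Stein--Tomas exponent $\frac{2(N+1)}{N+3}$ is strictly larger than $\crid$, so a direct adaptation fails. My plan is to interpolate between the $\crid$-bound and an HLS-based control of a frequency-localized piece of $v_n$, combined with a physical-space partition of unity so that the vanishing hypothesis can feed into a localized restriction estimate. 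I expect this Fourier step to be the main obstacle; once it is settled, the conclusion follows by combining the two contributions and contradicting $\limsup_n|\int v_n\mathbf{R}v_n\,dx|>0$.
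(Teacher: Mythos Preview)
Your overall architecture---contradiction with the Lions vanishing hypothesis, the decomposition $\Psi=\Psi_1+\Psi_2$, and separate treatment of the two pieces---matches the paper exactly. Your treatment of the $\Psi_2$ part is also essentially the paper's argument: long-range control via $\|\Psi_2\|_{L^{N/(N-2)}(\R^N\setminus B_R)}\to 0$ and Young's inequality, short-range control via a cube (or ball) covering combined with the Hardy--Littlewood--Sobolev inequality on each piece, summed using bounded overlap and the vanishing hypothesis. This is precisely where the critical case differs from the subcritical one in \cite{Evequoz2015}, and HLS is indeed the right replacement.

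Where you go astray is in flagging the $\Psi_1$ contribution as ``the main obstacle''. The paper disposes of it in one line by observing that \cite[Lemma~3.4]{Evequoz2015} already applies at $p=\crit$; no new Fourier-side argument is needed. Your reasoning for why a direct adaptation should fail is also confused: the Stein--Tomas exponent $\tfrac{2(N+1)}{N+3}$ is strictly \emph{larger} than $\crid$, but in the subcritical range $\tfrac{2(N+1)}{N-1}<p<\crit$ treated in \cite{Evequoz2015} one already has $p'<\tfrac{2(N+1)}{N+3}$. So the conjugate exponent $\crid$ is not in a new regime for the $\Psi_1$ estimate---it is simply the left endpoint of an open interval on which the lemma was already established, and the same proof goes through. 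Your proposed ``interpolation combined with a physical-space partition of unity'' is therefore unnecessary; you can simply invoke the existing lemma.

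In short: the novelty for $p=\crit$ lies entirely in the $\Psi_2$ step, which you handled correctly, while the $\Psi_1$ step requires no modification.
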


\begin{proof}
Consider the decomposition \eqref{eqn:decomp_Psi} of $\Psi$ introduced in the proof of Proposition~\ref{prop:lokkomp}.

We start by looking at the case where $v_n\in\cS(\RN)$ for all $n$, and we assume by contradiction that
\begin{equation}\label{hyp:vanishing}
\lim_{n\to\infty}\left(\sup_{y\in\RN}\int\limits_{B_\rho(y)}|v_n|^{2^+}\dx\right)=0, 
\quad \text{for all }\rho>0.
\end{equation}
Since Lemma 3.4 in \cite{Evequoz2015} holds for the critical exponent $p=2^\ast$,
we obtain that
\begin{equation}\label{eqn:Lemma3.4}
\int\limits_{\RN} v_n [\Psi_1 \ast v_n] \dx \to 0, \quad \text{as }n \to \infty,
\end{equation}
taking real parts.
Turning to $\Psi_2$, we note that the estimate \eqref{eqn:psi2} and the behaviour of $\Psi$ 
close to $x=0$ given by \eqref{asympt:Psi}
yield the existence of some constant $C'=C'(N)>0$ such that
\begin{equation}\label{eqn:phi2}
|\Psi_2(x)|\leq C' \min\{|x|^{2-N},|x|^{-N}\}, \qquad \text{for all }x \neq 0.
\end{equation}
Setting $M_R:= \R^N\backslash B_R$ for $R >1$, we deduce from \eqref{eqn:phi2} that  
\begin{equation*}
\|\Psi_2\|_{L^{\frac{N}{N-2}}(M_R)}
\leq C'\left(\ \int\limits_{|x|\geq R}|x|^{-\frac{N^2}{N-2}}\, dx\right)^\frac{N-2}{N} \to 0, \qquad \text{as }R \to \infty.  
\end{equation*}
Hence, by Young's inequality,
\begin{equation}\label{eqn:phi2_outside}
\sup_{n \in \N} \Bigl|\ \int\limits_{\RN} v_n [(\mathds{1}_{M_R} \Psi_2)*v_n]\,dx\Bigr| 
\leq \|\Psi_2\|_{L^{\frac{N}{N-2}}(M_R)} \sup_{n \in \N} \|v_n\|_{L^{2^+}(\R^N)}^2 \to 0,
\quad \text{as }R \to \infty. 
\end{equation}
Consider a decomposition of $\RN$ into disjoint $N$-cubes $\{Q_\ell\}_{\ell\in\N}$ 
of side length $R$, and let for each $\ell$ the $N$-cube $Q'_\ell$ have
the same center as $Q_\ell$ but side length $3R$. From the estimate \eqref{eqn:phi2}
and the Hardy-Littlewood-Sobolev inequality~\cite[Theorem 4.3]{Lieb_Loss2001}, 
there is a constant $C''=C''(N)$ such that
\begin{align*}
\Bigl|\int\limits_{\RN} v_n [(\mathds{1}_{B_R} \Psi_2)\ast v_n] \dx \Bigr|
 &\leq \sum_{\ell=1}^\infty\int\limits_{Q_\ell}\Bigl(\int\limits_{|x-y|<R}|\Psi_2(x-y)\,||v_n(x)|\, |v_n(y)| \dy\Bigr) dx\\
 &\leq
 C' \sum_{\ell=1}^\infty\int\limits_{Q_\ell}\left(\int\limits_{Q'_\ell}\frac{|v_n(x)|\,  |v_n(y)|}{|x-y|^{N-2}} \dy\right) dx\\ 
&\leq C'' \sum_{\ell=1}^\infty\Bigl(\int\limits_{Q_\ell'}|v_n(x)|^{\crid} \dx\Bigr)^{\frac{N+2}{N}}
\\
 & \leq C'' \Bigl[\:\sup_{\ell \in \N}\int_{Q'_\ell}|v_n(x)|^{\crid} \dx\Bigr]^{\frac2N} 
 \:\sum_{\ell=1}^\infty\int_{Q'_\ell}|v_n(x)|^{\crid} \dx\\ 
&\leq C''\Bigl[\:\sup_{y \in \RN}\int_{B_{3R\sqrt{N}}(y)}|v_n(x)|^{\crid} \dx\Bigr]^{\frac2N}
\: 3^N \|v_n\|_{\crid}^{\crid}, \qquad \text{for all } n.
\end{align*}
Therefore, the boundedness of $(v_n)_n$ and the assumption \eqref{hyp:vanishing} give
\begin{equation} \label{eqn:phi2_inside}
\lim_{n \to \infty} \int\limits_{\RN} v_n [(\mathds{1}_{B_R} \Psi_2)\ast v_n] \dx= 0, \qquad \text{for every } R>0.
\end{equation}
Combining \eqref{eqn:Lemma3.4}, \eqref{eqn:phi2_outside} and \eqref{eqn:phi2_inside}, we obtain
$$
\int\limits_{\RN} v_n \mR v_n \dx=\int\limits_{\RN} v_n [\Psi_1 \ast v_n] \dx+\int\limits_{\RN} v_n [\Psi_2 \ast v_n] \dx \to 0, 
\quad \text{as }n \to \infty,
$$
contradicting the assumption \eqref{eqn:hyp_nonvanish}.
Arguing by density, as in the proof of \cite[Theorem 3.1]{Evequoz2015}, the theorem follows.
\end{proof}
Let us recall a result obtained recently \cite[Lemma 2.4]{Evequoz_ampa2017}, 
on the bilinear form associated to the operator $\mR$ for functions having disjoint support.
Its proof relies on resolvent estimates involving the decomposition \eqref{eqn:decomp_Psi} of the fundamental
solution of the Helmholtz equation and on the decay bound in \cite[Proposition 3.3]{Evequoz2015}.
\begin{lemma}\label{lem:interact}
Let $p>\frac{2(N+1)}{N-1}$.
There exists a constant $D=D(N,p)>0$ such that 
for any $R>0$, $r\geq 1$ and $u, v\in L^{p'}(\R^N)$ 
with $\text{supp}(u)\subset B_R$
and $\text{supp}(v)\subset\R^N\backslash B_{R+r}$,
\begin{equation*}
\left|\int_{\R^N}u\mathbf{R} v\, dx\right|\leq D r^{-\lambda_p}\|u\|_{p'}\|v\|_{p'}, 
\quad\text{ where }\ \lambda_p=\frac{N-1}{2}-\frac{N+1}{p}.
\end{equation*}
\end{lemma}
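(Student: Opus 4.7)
The plan is to use the decomposition $\Psi=\Psi_1+\Psi_2$ from \eqref{eqn:decomp_Psi} and estimate the contributions of the two pieces to $\int_{\R^N}u\mathbf{R}v\,dx$ separately. In both cases the crucial observation is that the disjoint-support hypothesis forces $|x-y|\geq r\geq 1$ throughout the relevant region of integration, so only the large-argument behaviour of $\Psi_1$ and $\Psi_2$ matters.

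For the regular piece $\Psi_2$, the pointwise estimate $|\Psi_2(z)|\leq C|z|^{-N}$ from \eqref{eqn:psi2} implies that the kernel $K(z):=|z|^{-N}\mathds{1}_{\{|z|\geq r\}}$ belongs to $L^{p/2}(\R^N)$ with norm of order $r^{2N/p-N}$. Applying Young's convolution inequality with exponents $(p/2,p',p)$ yields an upper bound of the form $Cr^{2N/p-N}\|u\|_{p'}\|v\|_{p'}$; a short arithmetic check using $p>2(N+1)/(N-1)$ shows that this exponent satisfies $2N/p-N\leq-\lambda_p$, so the $\Psi_2$-contribution is already absorbed into the desired bound whenever $r\geq 1$.

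For the oscillatory piece $\Psi_1$, the essential input is the stationary-phase decay estimate $|\Psi_1(z)|\leq C(1+|z|)^{-(N-1)/2}$ from \cite[Proposition 3.3]{Evequoz2015}. I would derive two endpoint estimates for the bilinear form $B(u,v):=\int u\,(\Psi_1\ast v)\,dx$ on functions with supports separated by $r$. The trivial $L^1\times L^1$ endpoint, using the pointwise bound $|\Psi_1(x-y)|\leq Cr^{-(N-1)/2}$ on the relevant region, gives $|B(u,v)|\leq Cr^{-(N-1)/2}\|u\|_1\|v\|_1$. At the Stein--Tomas endpoint $p_0:=2(N+1)/(N-1)$, for which $\lambda_{p_0}=0$, combining the Kenig--Ruiz--Sogge estimate \eqref{eqn:c0} for $\mathbf{R}$ with the already-proven bound for $\Psi_2$ gives $|B(u,v)|\leq C\|u\|_{p_0'}\|v\|_{p_0'}$ without any $r$-decay. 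Bilinear complex interpolation between these two endpoints then produces the desired bound $|B(u,v)|\leq Cr^{-\lambda_p}\|u\|_{p'}\|v\|_{p'}$ for every $p>p_0$: the algebraic identity $(N-1)p_0/2=N+1$ characteristic of the Stein--Tomas exponent makes the interpolated decay rate match exactly $\lambda_p=(N-1)/2-(N+1)/p$.

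The main obstacle lies in obtaining the sharp rate for the $\Psi_1$ piece in the intermediate range $p_0<p\leq 4N/(N-1)$: a naive Young-type estimate, analogous to the one that works for $\Psi_2$, is useless there because the slow decay of $\Psi_1$ is not $L^{p/2}$-integrable. The interpolation against the Stein--Tomas endpoint is what recovers the sharp rate uniformly across the full admissible range $p>2(N+1)/(N-1)$.
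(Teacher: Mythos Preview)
The paper does not actually prove this lemma; it is quoted from \cite[Lemma~2.4]{Evequoz_ampa2017}, and the paper only records that the argument there rests on the decomposition \eqref{eqn:decomp_Psi} together with the decay bound for $\Psi_1$ from \cite[Proposition~3.3]{Evequoz2015}. Your proposal uses precisely these ingredients---Young's inequality on $\Psi_2$ via \eqref{eqn:psi2}, the $(1+|z|)^{-(N-1)/2}$ decay of $\Psi_1$, and interpolation against the Stein--Tomas endpoint---so it matches the approach the paper points to.

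Two small remarks. First, the reference \eqref{eqn:c0} is stated only for $p=2^\ast$; for your $L^{p_0'}\to L^{p_0}$ endpoint you should invoke the Kenig--Ruiz--Sogge estimate at $p_0=\tfrac{2(N+1)}{N-1}$ directly from \cite{kenig1987} (it holds there). Second, the interpolation is cleanest phrased linearly: for fixed $R,r$ consider the operator $T_{R,r}v=\mathds{1}_{B_R}\bigl(\Psi_1\ast(\mathds{1}_{\R^N\setminus B_{R+r}}v)\bigr)$, which is bounded $L^1\to L^\infty$ with norm $\leq Cr^{-(N-1)/2}$ and $L^{p_0'}\to L^{p_0}$ with norm $\leq C$, both constants independent of $R$; Riesz--Thorin then gives the $L^{p'}\to L^p$ bound with constant $Cr^{-\lambda_p}$ and your computation $\theta=1-p_0/p$, $(N-1)p_0/2=N+1$ recovers the exponent exactly.
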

Based on this estimate, we prove a technical result which will be used in Section~\ref{sect:PS} 
below to deal with a remainder term in an estimate derived from the Hardy-Littlewood-Sobolev inequality.
\begin{lemma}\label{lem:technisch}
Let $(z_n)_n\subset L^{\crid}(\RN)$ be a bounded sequence. Then, for every $\eps >0$, there exists
$\rho_\eps>0$ such that
$$
\liminf_{n\to\infty}\left|\ \int\limits_{\RN}\mathds{1}_{B_\rho}z_n \mathbf{R}\bigl(\mathds{1}_{M_\rho}z_n\bigr)\dx\right|<\eps,
\quad\text{for all }\rho\geq\rho_\eps.
$$
Here, $M_\rho:=\RN\backslash B_\rho$.
\end{lemma}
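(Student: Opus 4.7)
My plan is to decompose $\mathds{1}_{B_\rho}z_n$ and $\mathds{1}_{M_\rho}z_n$ each into a bulk piece at distance at least $\sigma$ from the sphere $\{|x|=\rho\}$ and a shell piece living in $\Sigma_\rho^\sigma:=B_{\rho+\sigma}\setminus B_{\rho-\sigma}$, where $\sigma\geq 1/2$ will be fixed depending only on $\eps$. With $C:=\sup_n\|z_n\|_{\crid}$, I would write $\mathds{1}_{B_\rho}z_n=u_n^1+u_n^2$, where $u_n^1:=\mathds{1}_{B_{\rho-\sigma}}z_n$ and $u_n^2:=\mathds{1}_{B_\rho\setminus B_{\rho-\sigma}}z_n$, and $\mathds{1}_{M_\rho}z_n=v_n^1+v_n^2$, with $v_n^1:=\mathds{1}_{B_{\rho+\sigma}\setminus B_\rho}z_n$ and $v_n^2:=\mathds{1}_{M_{\rho+\sigma}}z_n$, so that the integral in the statement splits into four bilinear terms $\int u_n^i\mathbf{R}v_n^j\dx$. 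The only one to which Lemma~\ref{lem:interact} applies (with $p=\crit$) is $\int u_n^1\mathbf{R}v_n^2\dx$, whose supports are separated by $2\sigma\geq 1$; since $\lambda_{\crit}>0$, this term is bounded by $D(2\sigma)^{-\lambda_{\crit}}C^2$. For each of the other three terms, at least one factor is supported in $\Sigma_\rho^\sigma$, so H\"older's inequality and the continuity estimate \eqref{eqn:c0} bound their sum by $3C_0 C\|\mathds{1}_{\Sigma_\rho^\sigma}z_n\|_{\crid}$.

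Given $\eps>0$, I would first fix $\sigma$ large enough that $D(2\sigma)^{-\lambda_{\crit}}C^2<\eps/3$, reducing the lemma to showing that for all $\rho$ larger than some $\rho_\eps$ to be chosen,
$$\liminf_{n\to\infty}\|\mathds{1}_{\Sigma_\rho^\sigma}z_n\|_{\crid}<\frac{\eps}{9C_0C}.$$
For this, I would consider the monotone tails $h_n(R):=\int_{|x|\geq R}|z_n|^{\crid}\dx$, a uniformly bounded family of nonincreasing functions on $[0,\infty)$. By Helly's selection theorem there is a subsequence $(h_{n_k})_k$ converging pointwise at the continuity points of some nonincreasing $h\colon[0,\infty)\to[0,C^{\crid}]$, and $h$ has a finite limit $h_\infty$ at $+\infty$. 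I would then fix $\rho_\eps$ so large that $h(\rho_\eps-\sigma-1)-h_\infty<(\eps/(9C_0C))^{\crid}$, and for each $\rho\geq\rho_\eps$ pick continuity points $\rho_1\in[\rho-\sigma-1,\rho-\sigma]$ and $\rho_2\in[\rho+\sigma,\rho+\sigma+1]$ of $h$ (possible since $h$ has at most countably many jumps). Since $\Sigma_\rho^\sigma\subset\{\rho_1\leq|x|<\rho_2\}$, passing to the limit along $(n_k)$ yields
$$\liminf_n\|\mathds{1}_{\Sigma_\rho^\sigma}z_n\|_{\crid}^{\crid}\leq\lim_k\bigl(h_{n_k}(\rho_1)-h_{n_k}(\rho_2)\bigr)=h(\rho_1)-h(\rho_2)<\Bigl(\frac{\eps}{9C_0C}\Bigr)^{\crid},$$
and combining this with the previous bound gives $\liminf_n\bigl|\int_{\RN}\mathds{1}_{B_\rho}z_n\mathbf{R}(\mathds{1}_{M_\rho}z_n)\dx\bigr|<\eps$.

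I expect the main difficulty to lie precisely in the last step: for fixed $\rho$, the shell mass $\|\mathds{1}_{\Sigma_\rho^\sigma}z_n\|_{\crid}$ need not tend to zero in $n$, and for fixed $n$ the tail bound $\|\mathds{1}_{M_R}z_n\|_{\crid}\to 0$ is not uniform in $n$, so neither a crude estimate nor a pointwise-in-$n$ tail estimate suffices. The precise quantifier pattern in the statement---``for all $\rho\geq\rho_\eps$'' combined with ``$\liminf$ in $n$''---is exactly what makes a single Helly-type extraction along $n$ work: one obtains one monotone limit $h$ whose asymptotic flatness at $+\infty$ controls all shells of width $2\sigma$ centered at radii $\rho\geq\rho_\eps$ simultaneously, without needing the extracted subsequence to depend on~$\rho$.
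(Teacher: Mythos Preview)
Your argument is correct. The overall strategy coincides with the paper's: split the interaction into a ``far'' piece controlled by Lemma~\ref{lem:interact} (with $p=\crit$, so $\lambda_{\crit}=\frac1N>0$) and a ``shell'' piece controlled via the bounded total mass of $(z_n)_n$. The technical execution, however, differs in two respects.

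First, the paper uses a $\rho$-dependent splitting $M_\rho=M_{2\rho}\cup(B_{2\rho}\setminus B_\rho)$, so that the far term is bounded by $D\zeta^2\rho^{-1/N}$ and becomes small by taking $\rho$ large; you instead fix a shell width $2\sigma$ independent of $\rho$ and make the far term small by taking $\sigma$ large. Both are legitimate uses of Lemma~\ref{lem:interact}.

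Second, and more notably, the shell-mass estimate is handled differently. The paper proves directly, by an elementary pigeonhole argument, that $\liminf_n\int_{B_{2\rho}\setminus B_\rho}|z_n|^{\crid}\dx<\eta$ for all large $\rho$: if this failed, one could find infinitely many disjoint dyadic annuli each carrying mass $\geq\eta$ for all large $n$, contradicting $\sup_n\|z_n\|_{\crid}<\infty$. Your route via Helly's selection theorem on the monotone tails $h_n(R)=\int_{|x|\geq R}|z_n|^{\crid}\dx$ is a nice alternative; it yields a single subsequence $(n_k)$ along which the shell mass is controlled \emph{uniformly} in $\rho\geq\rho_\eps$, which is slightly more than needed for the $\liminf$ statement but conceptually clean. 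The paper's pigeonhole argument is more elementary (no selection principle required), while your Helly extraction makes the quantifier structure you emphasize at the end very transparent. Either way the lemma follows.
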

\begin{proof}
Let $\zeta:=\sup\{\|z_n\|_{\crid}\, :\, n\in\N\}$.
We first see that by Lemma~\ref{lem:interact} there is a constant $D=D(N)>0$ such that
$$
\left|\ \int\limits_{\RN}\mathds{1}_{B_\rho}z_n \textbf{R}\bigl(\mathds{1}_{M_{2\rho}}z_n\bigr)\dx\right|
\leq D\zeta^2\rho^{-\frac1N}, \quad\text{ for all }n\in\N\text{ and every }\rho\geq 1.
$$
Hence, setting $\rho_0:=\max\bigl\{1, \left(\frac{2D\zeta^2}{\eps}\right)^N\bigr\}$ we find
$$
\left|\ \int\limits_{\RN}\mathds{1}_{B_\rho}z_n \textbf{R}\bigl(\mathds{1}_{M_{2\rho}}z_n\bigr)\dx\right|\leq \frac{\eps}2,
\quad\text{for all }n\in\N\text{ and every } \rho\geq\rho_0.
$$
Next, we choose $\eta>0$ such that $\eta<\left(\frac{\eps}{2C_0\zeta}\right)^{\crid}$, 
where $C_0>0$ is such that \eqref{eqn:c0} holds, and we claim that 
\begin{equation}\label{eqn:Absch_Ringe}
\text{there exists }\rho_1>0 \text{ such that}\quad 
\liminf_{n\to\infty}\int_{B_{2\rho}\backslash B_{\rho}}|z_n|^{\crid}\dx<\eta, \quad\text{for all }\rho\geq\rho_1.
\end{equation}
Suppose this is not the case. Then, for every $k\in\N$ we can find a radius $\rho_k\geq k$ 
and an index $n_0(k)\in\N$ for which
$$
\int_{B_{2\rho_k}\backslash B_{\rho_k}}|z_n|^{\crid}\dx\geq\eta,\quad\text{for all }n\geq n_0(k).
$$
Moreover, we can assume without loss of generality that $n_0(k+1)\geq n_0(k)$ and $\rho_{k+1}\geq 2\rho_k$. 
For each $\ell\in\N$, it follows that
$$
\zeta^{\crid}\geq \int_{\RN}|z_n|^{\crid}\, dx\geq \sum_{k=1}^{\ell}\int_{B_{2\rho_k}\backslash B_{\rho_k}}|z_n|^{\crid}\dx
\geq\ell\eta,\quad\text{for all }n\geq n_0(\ell).
$$
For $\ell$ large enough we obtain a contradiction, and the claim is proven.

As a consequence of the above results, we can write for $\rho\geq\rho_\eps:=\max\{\rho_0,\rho_1\}$,
\begin{align*}
\left|\ \int\limits_{\RN}\mathds{1}_{B_\rho}z_n \textbf{R}\bigl(\mathds{1}_{M_\rho}z_n\bigr)\dx\right|
&\leq \left|\ \int\limits_{\RN}\mathds{1}_{B_\rho}z_n \textbf{R}\bigl(\mathds{1}_{M_{2\rho}}z_n\bigr)\dx\right|
+ \left|\ \int\limits_{\RN}\mathds{1}_{B_\rho}z_n \textbf{R}\bigl(\mathds{1}_{B_{2\rho}\backslash B_{\rho}}z_n\bigr)\dx\right|\\
&\leq \frac{\eps}2 + C_0 \zeta \|\mathds{1}_{B_{2\rho}\backslash B_{\rho}}z_n\|_{\crid},
\end{align*}
using H\"older's inequality and the estimate \eqref{eqn:c0}.
The conclusion then follows from the claim~\eqref{eqn:Absch_Ringe}.
\end{proof}
%
\section{Existence via the dual variational method}\label{sec:dual}
\subsection{The dual energy functional}
We follow the path established in \cite{Evequoz2015}  
and use a dual variational framework to find nontrivial solutions for the problem
\begin{equation}\label{problem}
- \Delta u -u = Q(x)|u|^{2^{\ast} -2}u, \quad u\in W^{2,\crit}(\RN),
\end{equation}
where $Q  \in  L^{\infty}(\R^{N}) $ is a nonnegative function which is not identically zero. 
Setting $v = Q^{\frac{1}{\crid}}|u|^{\crit - 2}u$, we shall study the fixed-point problem
\begin{equation}\label{eqn:dualelsng} 
|v|^{\crid - 2}v = Q^{\frac{1}{\crit}}\textbf{R}(Q^{\frac{1}{\crit}}v), \qquad  v\in L^\crid(\RN), 
\end{equation}
where $\textbf{R}$ denotes the resolvent Helmholtz operator defined in Section~\ref{sec:compact}.
For the Birman-Schwinger type operators associated to the Helmholtz and Laplace resolvents respectively, we introduce
the notation 
\begin{equation}\label{eqn:BS}
\textbf{A}_{Q}v := Q^{\frac{1}{\crit}}\textbf{R}(Q^{\frac{1}{\crit}}v)\quad
\text{and}\quad \mathbf{G}_Qv := Q^{\frac{1}{\crit}}\textbf{R}_0(Q^{\frac{1}{\crit}}v),
\quad v\in L^{2^+}(\RN).
\end{equation}
We consider the functional
\begin{equation}\label{functional}
J_{Q}(v) := \frac{1}{\crid}\int\limits_{\R^{N}}|v|^{\crid} \dx - \frac{1}{2}\int\limits_{\R^{N}}v\textbf{A}_{Q}v \dx,
\quad\text{for }v \in L^{\crid}(\R^{N}).
\end{equation}
It is known that $J \in \ce^{1}(L^{\crid}(\RN),\R)$ and from the symmetry of $\textbf{A}_{Q}$ 
(cf.~\cite[Lemma 4.1]{Evequoz2015}) we have
$$ 
J_{Q}'(v)w = \int\limits_{\RN}\left(|v|^{\crid - 2}v - \textbf{A}_{Q}v\right)w \dx, 
\quad \text{for all } v,w \in L^{\crid}(\RN). 
$$

We detect solutions of \eqref{problem} by finding critical points of the functional $J_Q$. 
Indeed, for $v \in L^{\crid}(\RN)$ we have $J_Q'(v) = 0$ if and only if $v$ satisfies \eqref{eqn:dualelsng}.
Setting $u  = \textbf{R}(Q^{\frac{1}{\crit}}v)$, we see that $u$ solves $u = \textbf{R}\left(Q|u|^{\crit - 2}u\right)$.
Any solution of this integral equations has the following properties:
\begin{lemma}[{Special case of \cite[Lemma 4.3]{Evequoz2015}}]\label{lem:4.3}
Let $Q \in L^{\infty}(\RN)$ and consider a solution $u \in L^{\crit}(\RN)$ of $u = \textbf{R}\left(Q|u|^{\crit - 2}u\right)$. 
Then, $u\in W^{2,q}(\RN)$ for all $\crit\leq q< \infty$ and $u$ is a strong solution of \eqref{problem}. 
Moreover, $u$ is the real part of a function $\wt{u}$ which satisfies Sommerfeld's outgoing radiation condition
in the form
\begin{equation*}
\lim\limits_{R \to \infty}\frac1R\int\limits_{B_R}\left| \nabla \wt{u}(x) - i \wt{u}(x) \frac{x}{|x|}\right|^2\,dx = 0.
\end{equation*} 
In addition, $u$ satisfies the following asymptotic relation
\begin{equation*}
\lim\limits_{R \to \infty}\frac1R\int\limits_{B_R} \left| u(x) - \sqrt{\frac{\pi}{2}}\ 
\text{Re}\left[\frac{e^{i|x| - \frac{i(N-3)\pi}{4}}}{|x|^{\frac{N-1}{2}}}
\mathcal{F}\left(Q|u|^{\crit-2}u\right)\left(\frac{x}{|x|}\right)\right] \right|^2\,dx = 0.
\end{equation*}
\end{lemma}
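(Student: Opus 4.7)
The plan is to establish the three assertions sequentially: first the regularity via a bootstrap, then the two asymptotic statements via a complex lifting to the outgoing fundamental solution. I would start from the integral identity $u = \mathbf{R}(Q|u|^{\crit - 2}u)$, observing that since $u \in L^{\crit}(\RN)$ and $Q\in L^\infty$, the source $f := Q|u|^{\crit - 2}u$ lies in $L^{\crid}(\RN)$. By Proposition~\ref{prop:lokkomp}(i), $(\mathbf{R} - \mathbf{R}_0)f \in W^{2,\crit}(\RN)$, and $\mathbf{R}_0 f$ is the Newtonian potential of an $L^{\crid}$ function, hence in $W^{2,\crid}_{\text{loc}}(\RN)$ by Calder\'on--Zygmund theory. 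In particular $u \in W^{2,\crid}_{\text{loc}}(\RN)$ and satisfies $-\Delta u - u = f$ a.e. A standard bootstrap combining interior elliptic $L^q$-estimates for $-\Delta - 1$ with Sobolev embeddings (using $|u|^{\crit-2}u \in L^{q/(\crit - 1)}$ once $u\in L^q$) then successively widens the integrability of $u$ until $u \in W^{2,q}(\RN)$ for every $q \in [\crit, \infty)$. Consequently $u$ is a strong solution of \eqref{problem}.

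For the Sommerfeld condition, I would introduce the complex lift $\wt{u} := \Phi \ast f$, where $\Phi$ is the outgoing fundamental solution \eqref{fundsol}. Since $\text{Re}(\Phi) = \Psi$ by \eqref{eqn:realteil}, the real part of $\wt{u}$ equals $\mathbf{R} f = u$, and the regularity argument above transfers verbatim to $\wt{u}$. Using the large-argument expansion of the Hankel function $H^{(1)}_{(N-2)/2}$ one obtains
\begin{equation*}
\Phi(x) = \sqrt{\tfrac{\pi}{2}}\,\frac{e^{i|x| - i(N-3)\pi/4}}{(2\pi|x|)^{(N-1)/2}}\bigl(1 + O(|x|^{-1})\bigr), \quad\text{as }|x|\to\infty,
\end{equation*}
together with the companion relation $\nabla\Phi(x) = i\tfrac{x}{|x|}\Phi(x) + O(|x|^{-(N+1)/2})$. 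Substituting these into $\wt{u}$ and $\nabla\wt{u}$, and using $|x-y| = |x| - \tfrac{x\cdot y}{|x|} + O(|x|^{-1})$ uniformly for $y$ in compact sets, yields that the Sommerfeld integrand $\bigl|\nabla\wt{u}(x) - i\wt{u}(x)\tfrac{x}{|x|}\bigr|^2$ is of size $o(|x|^{1-N})$ pointwise for large $|x|$; averaging over $B_R$ and dividing by $R$ then gives the required limit.

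The far-field formula for $u$ follows by taking the real part of the analogous expansion
\begin{equation*}
\wt{u}(x) = \sqrt{\tfrac{\pi}{2}}\,\frac{e^{i|x| - i(N-3)\pi/4}}{|x|^{(N-1)/2}}\,\mathcal{F}(f)\!\left(\tfrac{x}{|x|}\right) + o\bigl(|x|^{(1-N)/2}\bigr), \quad\text{as }|x|\to\infty.
\end{equation*}
The main obstacle is to make the far-field expansion uniform enough that the squared remainder, integrated over $B_R$ and divided by $R$, actually tends to zero. I would handle this by splitting the convolution into a piece where $|y|$ is bounded, on which a uniform Taylor expansion of the phase and amplitude of $\Phi(x-y)$ is legitimate, and a tail piece where $|y|$ is large, controlled using $f \in L^{\crid}(\RN)\cap L^\infty(\RN)$ (the $L^\infty$ membership following from the regularity established in the first step). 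The cross terms are then estimated via the Kenig--Ruiz--Sogge bound \eqref{eqn:c0} applied to $\mathbf{R}$. These estimates form the technical core of \cite[Lemma 4.3]{Evequoz2015}, and since the hypotheses $u\in L^{\crit}(\RN)$ and $p=\crit$ fit exactly within that framework, the present statement follows as a direct specialization.
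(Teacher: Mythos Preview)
The paper gives no proof of this lemma: it is simply stated as a special case of \cite[Lemma 4.3]{Evequoz2015}, and your proposal ultimately does the same by deferring to that reference in its final sentence. The sketch you supply beforehand---the bootstrap for $W^{2,q}$ regularity, the complex lift $\wt{u}=\Phi\ast f$ with $\text{Re}(\wt{u})=u$, and the Hankel-function asymptotics yielding both the Sommerfeld condition and the far-field pattern---is a faithful outline of the argument in the cited source, so your approach coincides with the paper's.
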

As shown in \cite[Lemma 4.2]{Evequoz2015}, the functional $J_Q$ has the mountain pass geometry, i.e.
\begin{align*}
&(\text{MP1}) \text{ there exists } \delta>0\text{ and }\rho>0\text{ such that }J_Q(v)\geq\delta>0, 
\text{ for all }v\in L^{\crid}(\RN)\text{ with }\|v\|_{\crid}=\rho;\\
&(\text{MP2}) \text{ there exists } v_0\in L^{\crid}(\RN)\text{ such that }\|v_0\|_{\crid}>\rho\text{ and }J_Q(v_0)<0.
\end{align*}
The mountain pass level
$$
L_Q:=\inf\limits_{P\in \mathcal{P}}\max\limits_{t\in[0,1]}J_Q(P(t)), \quad
\text{where}\quad \mathcal{P}=\left\{P\in \ce([0,1],L^{\crid}(\R^N))\, :\, P(0)=0
\text{ and }J_Q(P(1))<0\right\},
$$
is therefore well defined, $0<L_Q<\infty$, and by the same arguments as in \cite[Lemma 4.1]{Evequoz_na2017},
it can be characterized as the following infimum
\begin{equation}\label{eqn:MP_equivalent}
\begin{aligned}
L_Q&=\inf\left\{J_Q(t_vv)\, :\, v\in L^{\crid}(\RN) \text{ with } \int\limits_{\RN}v\mathbf{A}_Q v \dx>0\right\}\\
&=\inf\left\{\frac{1}{N}\left( \frac{\left\|v\right\|^{2}_{\crid}}{\int\limits_{\RN}v\textbf{A}_{Q}v \dx}\right)^{\frac{N}{2}}
\, :\, v\in L^{\crid}(\RN) \text{ with } \int\limits_{\RN}v\mathbf{A}_Qv \dx>0\right\}.
\end{aligned}
\end{equation}
Here, for $v\in L^{\crid}(\RN)$ with $\int\limits_{\RN}v\mathbf{A}_Qv \dx>0$,
\begin{equation}\label{zeit}
t_{v}=\left(\frac{\int\limits_{\RN}|v|^{\crid}~dx}{\int\limits_{\RN}v\textbf{A}_{Q}v~dx}\right)^{\frac{1}{2-\crid}}
\end{equation} 
denotes the unique $t>0$ with the property $J_Q(t_vv)=\max\limits_{t>0}J_Q(tv)$. 
Remarking that for every such $v$, $J_Q'(t_vv)v=0$, we see that if $L_Q$ is achieved by some critical point of $J_Q$, 
then $L_Q$ coincides with the least-energy level, i.e.,
$$
L_Q=\inf\left\{J_Q(v)\, :\,  v\in L^{\crid}(\RN)\backslash\{0\}\text{ with }J'_Q(v)=0\right\}.
$$
Following the terminology introduced in \cite{Evequoz_ampa2017}, we will call a solution $u$ of the nonlinear Helmholtz
equation \eqref{problem} a {\em dual ground state}, if $u  = \textbf{R}(Q^{\frac{1}{\crit}}v)$ and $v\in L^{\crid}(\RN)$ is a
critical point of the functional $J_Q$ at the mountain pass level, i.e., $J_Q'(v)=0$ and $J_Q(v)=L_Q$. As a consequence
of the discussion at the beginning of the section, every dual ground state $u$ has the properties stated 
in Lemma~\ref{lem:4.3}.
\subsection{Palais-Smale sequences}\label{sect:PS}
In this section, we investigate the properties of Palais-Smale sequences for the functional $J_Q$.
Recall that a sequence $(v_n)_n\subset L^{\crid}(\RN)$ is called a {\em Palais-Smale sequence} for $J_Q$ if $(J_Q(v_n))_n$
is bounded and $\|J_Q'(v_n)\|_\ast \to 0$ as $n\to\infty$. Here, $\|\cdot\|_\ast$ denotes the dual norm to $\|\cdot\|_{\crid}$. 
If, in addition, $J_Q(v_n)\to \beta$ as $n\to\infty$ for some $\beta\in\R$,
$(v_n)_n$ is called a {\em (PS)$_\beta$-sequence} for $J_Q$. We start by considering 
sequences which satisfy a localized version of
the above property. For this purpose, we introduce the following piece of notation: If $v\in L^{\crid}(\RN)$, 
we let $J_{Q}'(v)\mathds{1}_{B_r}$ denote the continuous linear form 
$w \mapsto J_{Q}'(v)(\mathds{1}_{B_r}w)$ on $L^{\crid}(\RN)$.
\begin{lemma}\label{lem:PS_Folgen}
Let $(v_{n})_{n} \subset L^{\crid}(\RN)$ be a bounded sequence such that, for all $r>0$, 
$\|J_{Q}'(v_n)\mathds{1}_{B_r}\|_\ast\to 0$ as $n\to\infty$. 
Then, up to a subsequence,
\begin{enumerate}[(i)]
	\item $(v_{n})_{n}$ has a weak limit $v \in L^{\crid}(\RN)$.
	\item  For all $1 \leq q < \crid$ and all $r>0$, we have $\mathds{1}_{B_r}v_{n} \rightarrow \mathds{1}_{B_r}v$, strongly
	 in $L^{q}(\RN)$ and $v_n\to v$ a.e. on $\RN$, as $n \rightarrow \infty$.
	\item $J_{Q}'(v) = 0$.
	\item As $n \rightarrow \infty$, we have for all $r>0$,
	\begin{equation}\label{eqn:BLnormintegral}
	\left\| \mathds{1}_{B_{r}}(v_{n}-v)\right\|^{\crid}_{\crid} 
	= \int\limits_{\RN}\mathds{1}_{B_{r}}(v_{n} - v) \textbf{A}_{Q}(v_{n} - v) ~\dx + o(1).
	\end{equation}	
\end{enumerate}	
\end{lemma}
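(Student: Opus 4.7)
Part (i) is immediate from reflexivity of $L^{\crid}(\RN)$: the bounded sequence $(v_n)$ has a weakly convergent subsequence with limit $v\in L^{\crid}(\RN)$. The core difficulty lies in (ii); once a.e.~convergence is secured, parts (iii) and (iv) will follow by standard manipulations. The plan for (ii) is as follows. I would first rewrite the hypothesis $\|J_{Q}'(v_n)\mathds{1}_{B_r}\|_\ast\to 0$, via the duality $(L^{\crid})^\ast=L^{\crit}$, as
\[
\bigl\|\mathds{1}_{B_r}\bigl(|v_n|^{\crid-2}v_n-\mathbf{A}_Qv_n\bigr)\bigr\|_{\crit}\to 0 \quad\text{for every }r>0.
\]
Applying Lemma~\ref{lem:AQsubkritischkompakt} to the $L^{\crid}$-bounded sequence $(Q^{1/\crit}v_n)$ and using $Q\in L^\infty$, strong convergence $\mathds{1}_{B_r}\mathbf{A}_Qv_n\to\mathds{1}_{B_r}\mathbf{A}_Qv$ holds in $L^t(\RN)$ for every $t<\crit$. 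Combining this with the display above and the continuous embedding $L^{\crit}(B_r)\embed L^t(B_r)$ forces $\mathds{1}_{B_r}|v_n|^{\crid-2}v_n\to\mathds{1}_{B_r}\mathbf{A}_Qv$ in $L^t(\RN)$ for every $t<\crit$. Passing to a further subsequence produces a.e.~convergence of $\mathds{1}_{B_r}|v_n|^{\crid-2}v_n$ on $B_r$, and inverting the homeomorphism $s\mapsto|s|^{\crid-2}s$ gives an a.e.~limit $\tilde v$ of $(v_n)$ on $B_r$ which, by uniqueness of weak limits in $L^{\crid}(B_r)$, must coincide with $v$. A diagonal extraction over an exhaustion of $\RN$ yields the a.e.~assertion in~(ii), and Vitali's theorem (using uniform integrability of $(|v_n|^q)$ on $B_r$ for $q<\crid$, which follows from the $L^{\crid}$-bound) provides the local strong convergence in $L^q$.

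For (iii), given $\phi\in\ceinftyc(\RN)$ with $\mathrm{supp}\,\phi\subset B_r$, the hypothesis gives $|J_{Q}'(v_n)\phi|\leq\|J_{Q}'(v_n)\mathds{1}_{B_r}\|_\ast\|\phi\|_{\crid}\to 0$. The a.e.~convergence together with the uniform $L^{\crit}$-bound on $|v_n|^{\crid-2}v_n$ yields $|v_n|^{\crid-2}v_n\weakto|v|^{\crid-2}v$ in $L^{\crit}(\RN)$, while the symmetry of $\mathbf{A}_Q$ combined with $v_n\weakto v$ in $L^{\crid}(\RN)$ yields $\int\mathbf{A}_Qv_n\,\phi\,dx\to\int\mathbf{A}_Qv\,\phi\,dx$. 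Letting $n\to\infty$ gives $J_{Q}'(v)\phi=0$, and (iii) follows by density of $\ceinftyc(\RN)$ in $L^{\crid}(\RN)$.

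For (iv), I would test the localized Palais--Smale condition against the bounded sequence $v_n-v$ to obtain
\[
\int_{\RN}\mathds{1}_{B_r}\bigl(|v_n|^{\crid-2}v_n-\mathbf{A}_Qv_n\bigr)(v_n-v)\,dx=o(1).
\]
For the nonlinear term, the Brezis--Lieb lemma on $B_r$ together with the weak convergence from the previous paragraph yields
\[
\int\mathds{1}_{B_r}|v_n|^{\crid-2}v_n(v_n-v)\,dx=\|\mathds{1}_{B_r}(v_n-v)\|_{\crid}^{\crid}+o(1).
\]
For the linear term, decomposing $\mathbf{A}_Qv_n=\mathbf{A}_Q(v_n-v)+\mathbf{A}_Qv$ and using $\mathbf{A}_Qv\in L^{\crit}$ together with $v_n-v\weakto 0$ in $L^{\crid}$ gives
\[
\int\mathds{1}_{B_r}\mathbf{A}_Qv_n(v_n-v)\,dx=\int\mathds{1}_{B_r}(v_n-v)\mathbf{A}_Q(v_n-v)\,dx+o(1).
\]
Substituting the last two identities into the first produces~\eqref{eqn:BLnormintegral}.

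The delicate point is the first step: upgrading the borderline dual estimate to a.e.~information. The decisive ingredient there is Lemma~\ref{lem:AQsubkritischkompakt}, which allows one to downgrade the critical exponent $\crit$ to a subcritical one and thereby invert the nonlinearity $s\mapsto|s|^{\crid-2}s$ pointwise; the remaining steps are then routine applications of Brezis--Lieb together with the symmetry of $\mathbf{A}_Q$ and its boundedness from $L^{\crid}(\RN)$ into $L^{\crit}(\RN)$.
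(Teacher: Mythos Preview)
Your proof is correct and follows essentially the same strategy as the paper: the crucial step (ii) hinges on the same subcritical compactness result (Lemma~\ref{lem:AQsubkritischkompakt}), and (iii), (iv) then follow from routine weak-convergence and Br\'ezis--Lieb arguments. The only minor technical differences are that, for (ii), the paper shows directly that $(\mathds{1}_{B_r}|v_n|^{\crid-2}v_n)_n$ is a Cauchy sequence in $L^t$ and inverts via the Lipschitz continuity of the Nemytskii map $u\mapsto |u|^{\crit-2}u$ (thereby avoiding the Vitali and limit-identification step), and for (iv) it tests the localized gradient against $\mathds{1}_{B_r}v_n$ rather than $\mathds{1}_{B_r}(v_n-v)$.
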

\begin{proof}
Since $(v_n)_n$ is bounded in $L^{\crid}(\RN)$, there exists $v\in L^{\crid}(\RN)$ and a subsequence
which we still denote by $(v_n)_n$ such that $v_n\weakto v$ weakly. This proves (i). From now on, we restrict
to this particular subsequence. To prove (ii), let $r>0$, $1 \leq t < \crit$ and $\varphi \in \ceinftyc(\RN)$. 
For $n,m \in \N$ we have 
\begin{align*}
	&\left|\  \int\limits_{\RN}  \left(  \mathds{1}_{B_r}|v_{n}|^{\crid-2}v_{n} 
	- \mathds{1}_{B_r}|v_{m}|^{\crid - 2}v_{m}  \right) \varphi \dx \right|\\
	&\qquad=
	\left| \Bigl[J_{Q}'(v_{n}) - J_{Q}'(v_{m})\Bigr](\mathds{1}_{B_r}\varphi)
	+ \int\limits_{\RN}\mathds{1}_{B_r}\varphi \textbf{A}_{Q}(v_{n} - v_{m}) \dx \right| \\
	&\qquad\leq
	\left\| J_{Q}'(v_{n})\mathds{1}_{B_r} - J_{Q}'(v_{m})\mathds{1}_{B_r} \right\|_\ast 
	\left\|\mathds{1}_{B_r}\varphi\right\|_{\crid} 
	+ \left\| \mathds{1}_{B_r}\textbf{A}_{Q}(v_{n} - v_{m})\right\|_{t}  \left\|\varphi\right\|_{t'} \\
	&\qquad\leq
	C \left[  \left\| J_{Q}'(v_{n})\mathds{1}_{B_r} \right\|_\ast + \left\| J_{Q}'(v_{m})\mathds{1}_{B_r}\right \|_\ast \right] 
	\left\|\varphi\right\|_{t'} 
	+ \|Q\|_\infty^{\frac1\crit}\left\|\mathds{1}_{B_r}\textbf{R}\left(Q^{\frac{1}{\crit}}\left(v_{n} - v_{m}\right)\right)\right\|_{t}
	\left\|\varphi\right\|_{t'},
\end{align*}
where the constant $C>0$ depends on $N$ and $r$.
The first expression in the last line vanishes as $n, m \rightarrow \infty$, by assumption,
and the second one also vanishes due to Lemma \ref{lem:AQsubkritischkompakt}. 
Therefore, arguing by density, we find that $(\mathds{1}_{B_r}|v_{n}|^{\crid-2}v_{n})_{n}$ is a Cauchy sequence 
in $L^{t}(\RN)$. Since the mapping $N : L^{t}(\RN) \rightarrow L^{\frac{t}{\crit-1}}(\RN)$ given by 
$N(u):= |u|^{\crit - 2} u$ is well defined and Lipschitz continuous, it follows that
$(\mathds{1}_{B_r}v_{n})_{n}=\bigl(N(\mathds{1}_{B_r}|v_{n}|^{\crid-2}v_{n})\bigr)_{n}$ is a Cauchy sequence in $L^q(\RN)$
for all $1\leq q<\crid$. Since these spaces are complete, and since $\mathds{1}_{B_r}v_n\weakto \mathds{1}_{B_r}v$ 
in each of these spaces, we obtain the desired strong convergence 
$\mathds{1}_{B_r}v_n\to\mathds{1}_{B_r}v$ in $L^q(\RN)$ for all $1\leq q<\crid$.
Going to a subsequence, we also have the pointwise convergence $v_n(x)\to v(x)$ as $n\to\infty$, 
for almost every $x\in\R^N$.
	
Assertion (iii) now follows from (ii),
since for $\varphi \in \ceinftyc(\RN)$ and $r>0$ such that $\text{supp}(\varphi)\subset B_r$, we have
\begin{align*}
	J_{Q}'(v)\varphi &= \int\limits_{\RN}\mathds{1}_{B_r}|v|^{\crid -2}v \varphi \dx 
	- \int\limits_{\RN}\mathds{1}_{B_r}\varphi \textbf{A}_{Q}v \dx\\
	&=
	\lim\limits_{n \rightarrow \infty}\left[\ \int\limits_{\RN}\mathds{1}_{B_r}|v_{n}|^{\crid -2}v_{n} \varphi\dx 
	- \int\limits_{\RN}\mathds{1}_{B_r}\varphi \textbf{A}_{Q}v_{n} \dx  \right]\\
	&=
	\lim\limits_{n \rightarrow \infty}J_{Q}'(v_{n})\mathds{1}_{B_{r}}\varphi = 0.
\end{align*}
To prove assertion (iv) we use the assumption $\|J_{Q}'(v_n)\mathds{1}_{B_r}\|_\ast\to 0$ as $n\to\infty$ and write
\begin{equation}\label{eqn:Gradient_lokal}
\begin{aligned}
	o(1)  &= J'_{Q}(v_{n})\mathds{1}_{B_{r}}v_{n}
	= \left\| \mathds{1}_{B_{r}}v_{n}\right\|^{\crid}_{\crid} - \int\limits_{\RN}\mathds{1}_{B_{r}}v_{n}\textbf{A}_{Q}v_{n} \dx\\
	&=
	\left\| \mathds{1}_{B_{r}}v_{n}\right\|^{\crid}_{\crid}
	-
	\int\limits_{\RN} \mathds{1}_{B_{r}}(v_{n} - v)\textbf{A}_{Q}(v_{n} - v)\dx
	-
	\int\limits_{\RN} \mathds{1}_{B_{r}}v_{n}\textbf{A}_{Q}v\dx
	-
	\int\limits_{\RN} \mathds{1}_{B_{r}}v\textbf{A}_{Q}(v_{n} - v)\dx.
\end{aligned}
\end{equation}
The last expression vanishes as $n \rightarrow \infty$, since $v_n\weakto v$ in $L^{\crid}(\R^N)$ 
and since $\textbf{A}_Q$: $L^{\crid}(\R^N)$ $\to$ $L^{\crit}(\R^N)$ is continuous.
Furthermore, using (iii), the weak convergence $v_n\weakto v$ 
and the Br\'ezis-Lieb Lemma \cite[Lemma 1.32]{Willem}, which applies due to (ii), we obtain
\begin{align*}
	\left\| \mathds{1}_{B_{r}}v_{n}\right\|^{\crid}_{\crid} - 	\int\limits_{\RN} \mathds{1}_{B_{r}}v_{n}\textbf{A}_{Q}v\dx 
	&=
	\left\| \mathds{1}_{B_{r}}v_{n}\right\|^{\crid}_{\crid} -\left( \left\| \mathds{1}_{B_{r}}v\right\|^{\crid}_{\crid} 
	- \int\limits_{\RN} \mathds{1}_{B_{r}}v\textbf{A}_{Q}v\dx\right) - \int\limits_{\RN} \mathds{1}_{B_{r}}v_{n}\textbf{A}_{Q}v\dx\\
	&=
	\left\| \mathds{1}_{B_{r}}(v_{n} -v)\right\|^{\crid}_{\crid} +o(1), \quad\text{as }n\to\infty.
\end{align*}
Substituting in \eqref{eqn:Gradient_lokal}, the desired conclusion follows.
\end{proof}
In the above proof, the fact that the operator $\mathds{1}_{B_r}\mathbf{R}$: $L^{\crid}(\RN)$ $\to$ $L^t(\RN)$ is compact
for $1\leq t<\crit$ was essential. For $t=\crit$, the compactness does not hold anymore and therefore the assertion (ii) is false in this case.
However, in view of Proposition~\ref{prop:lokkomp}, this is only caused by the noncompactness of the operator 
$\mathds{1}_{B_r}\mathbf{R}_0$: $L^{\crid}(\RN)$ $\to$ $L^{\crit}(\RN)$ associated to the fundamental solution 
of Laplace's equation. 
In the next result, we prove that local strong convergence can be restored, provided
the mountain pass level $L_Q$ lies below the threshold value given by the least-energy level $L_Q^\ast$ 
of the functional
$$
J_Q^\ast(v):=\frac1{\crid}\int\limits_{\RN} |v|^\crid \dx -\frac12\int\limits_{\RN} v\mathbf{G}_qv \dx,
$$
where, using the notation \eqref{eqn:BS}, $\mathbf{G}_qv=q^{\frac1\crit}\mathbf{R}_0(q^{\frac1\crit}v)$ for $q=\|Q\|_\infty$.
As mentioned in the introduction, this functional arises from the limit of suitable rescaling of $J_Q$. 
The least-energy level $L_Q^\ast$ can be characterized by a formula similar to \eqref{eqn:MP_equivalent}, namely
\begin{equation}\label{eqn:MP_crit}
L_Q^\ast=\inf\left\{ \frac1N\left(\frac{\|v\|_{\crid}^2}{\|Q\|_\infty^{\frac2\crit}\int\limits_{\RN}v\mathbf{R}_0v \dx}\right)^{\frac{N}2}\, :\, 
v\in L^{\crid}(\RN)\backslash\{0\} \right\}.
\end{equation}
We note incidentally that it can be expressed in terms of the optimal constant $S$ for the Sobolev inequality in $\R^N$,
\begin{equation}\label{eqn:Sobolev_inequ}
\|\nabla u\|_2^2 \geq S \|u\|_{\crit}^2, \quad \text{for all }u\in L^{\crit}(\RN) \text{ with } \nabla u\in L^2(\RN).
\end{equation}
Indeed, it is known (see \cite{Lieb83} and also \cite{Dolbeault2011}) that the Sobolev inequality is dual 
to the Hardy-Littlewood-Sobolev inequality 
\begin{equation}\label{eqn:HLS_inequ}
\int\limits_{\RN}v \mathbf{R}_0v \dx \leq S^{-1} \|v\|_{\crid}^2
\end{equation}
and that the optimal constants are inverse to each other. Hence, we obtain
\begin{equation}\label{eqn:MP_crit_sobolev}
L_Q^\ast=\frac{S^{\frac{N}2}}{N\|Q\|_\infty^{\frac{N-2}2}}.
\end{equation}
\begin{proposition}\label{prop:PS_unter_kritisch_Level}
Let $Q\in L^\infty(\R^N)\backslash\{0\}$ have the form
$Q=Q_{\text{per}}+Q_0$, for some $Q_{\text{per}}, Q_0\geq 0$ such that $Q_{\text{per}}$ is $\Z^N$-periodic 
and $Q_0(x)\to 0$ as $|x|\to\infty$.

If $(v_n)_n\subset L^{\crid}(\RN)$ is a (PS)$_\beta$--sequence for $J_Q$ such that
$\beta=L_Q<L_Q^\ast$, then
there exists $w\in L^{\crid}(\RN)$, $w\neq 0$, such that $J_Q'(w)=0$ and $J_Q(w)=L_Q$.
\end{proposition}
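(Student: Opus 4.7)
My plan is to apply Lemma~\ref{lem:PS_Folgen} to the sequence $(v_n)_n$ to extract a weakly convergent subsequence, identify its weak limit $w\in L^{\crid}(\RN)$ as a critical point of $J_Q$, and then use the strict inequality $L_Q<L_Q^\ast$ together with Theorem~\ref{nonvanishing}, Proposition~\ref{prop:lokkomp} and the Hardy-Littlewood-Sobolev inequality \eqref{eqn:HLS_inequ} to guarantee $w\neq 0$. First I would verify that $(v_n)_n$ is bounded in $L^{\crid}(\RN)$: the identity
\begin{equation*}
J_Q(v_n)-\tfrac12 J_Q'(v_n)v_n=\tfrac1N\|v_n\|_{\crid}^{\crid},
\end{equation*}
together with $J_Q(v_n)\to L_Q$ and $\|J_Q'(v_n)\|_\ast\to 0$, yields a uniform bound on $\|v_n\|_{\crid}$, and moreover $\|v_n\|_{\crid}^{\crid}\to NL_Q$, so that $\int v_n\mathbf{A}_Qv_n\dx\to NL_Q>0$. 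Since the PS hypothesis automatically implies the localized condition $\|J_Q'(v_n)\mathds{1}_{B_r}\|_\ast\to 0$ for every $r>0$, Lemma~\ref{lem:PS_Folgen} produces a subsequence with $v_n\weakto w$ in $L^{\crid}$, $v_n\to w$ a.e., $J_Q'(w)=0$ and the localized Br\'ezis-Lieb identity (iv).

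When $w\neq 0$, testing $J_Q'(w)w=0$ gives $J_Q(w)=\tfrac1N\|w\|_{\crid}^{\crid}$, so Fatou's lemma combined with $\tfrac1N\|v_n\|_{\crid}^{\crid}\to L_Q$ yields $J_Q(w)\leq L_Q$. Since $w$ is a nontrivial critical point with $\int w\mathbf{A}_Qw\dx=\|w\|_{\crid}^{\crid}>0$, the mountain-pass characterization \eqref{eqn:MP_equivalent} gives $J_Q(w)\geq L_Q$, so $J_Q(w)=L_Q$ and $w$ is the desired critical point.

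The main obstacle is therefore to exclude the case $w=0$, which I would address by contradiction, aiming to derive $L_Q\geq L_Q^\ast$. Theorem~\ref{nonvanishing} applied to $z_n:=Q^{1/\crit}v_n$ (bounded in $L^{\crid}$, with $\int z_n\mathbf{R}z_n\dx\to NL_Q>0$) yields $R,\zeta>0$ and $(x_n)_n\subset\RN$ with $\int_{B_R(x_n)}Q|v_n|^{\crid}\dx\geq\zeta$ along a subsequence. If $(x_n)_n$ is unbounded I pass to a further subsequence with $|x_n|\to\infty$ and translate by $y_n\in\Z^N$ with $|y_n-x_n|\leq\sqrt N$: the $\Z^N$-periodicity of $Q_{\text{per}}$ and the decay $Q_0(\cdot+y_n)\to 0$ locally uniformly imply that $\tilde v_n:=v_n(\cdot+y_n)$ is asymptotically a $(PS)_{L_Q}$-sequence for $J_{Q_{\text{per}}}$ with weak limit still zero, and the mass concentration in the fixed ball $B_{R+\sqrt N}$ is preserved. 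In either situation (original sequence if $(x_n)$ is bounded, translated otherwise), the key HLS-based contradiction proceeds by splitting $\int v_n\mathbf{A}_Qv_n\dx=\int z_n\mathbf{R}z_n\dx$ along $\mathbf{R}=\mathbf{R}_0+(\mathbf{R}-\mathbf{R}_0)$, using the local compactness in Proposition~\ref{prop:lokkomp}(ii) together with $z_n\weakto 0$ to see that the $(\mathbf{R}-\mathbf{R}_0)$-contribution vanishes on any fixed ball $B_\rho$, controlling the cross-interaction between $B_\rho$ and $B_\rho^c$ via Lemma~\ref{lem:technisch}, and bounding the $\mathbf{R}_0$-part by the Hardy-Littlewood-Sobolev inequality \eqref{eqn:HLS_inequ}. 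These combine to give
\begin{equation*}
NL_Q+o(1)=\int v_n\mathbf{A}_Qv_n\dx\leq S^{-1}\|Q\|_\infty^{2/\crit}\|v_n\|_{\crid}^{2}+o(1),
\end{equation*}
and after passing to the limit and using $\|v_n\|_{\crid}^{\crid}\to NL_Q$ this rearranges to $NL_Q\geq S^{N/2}\|Q\|_\infty^{-(N-2)/2}$, i.e.\ $L_Q\geq L_Q^\ast$, contradicting the hypothesis. The hardest step will be making the above HLS estimate rigorous, since tightness of $(z_n)$ is not available a priori; this is precisely where the interplay between the local compactness of Proposition~\ref{prop:lokkomp}, the decay of interaction from Lemma~\ref{lem:technisch}, and the translation step (ensuring that the relevant mass remains in a uniform neighborhood where $(\mathbf{R}-\mathbf{R}_0)$ can be controlled) becomes indispensable.
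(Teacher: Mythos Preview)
Your overall plan is close to the paper's, but the contradiction step contains a genuine gap. You propose to show, assuming $w=0$, that
\[
\int_{\RN} z_n\,\mathbf{R} z_n\,dx \leq S^{-1}\|Q\|_\infty^{2/\crit}\|v_n\|_{\crid}^2+o(1)
\]
by writing $\mathbf{R}=\mathbf{R}_0+(\mathbf{R}-\mathbf{R}_0)$ and using local compactness of $\mathbf{R}-\mathbf{R}_0$ on a ball $B_\rho$ together with Lemma~\ref{lem:technisch} for the $B_\rho$--$M_\rho$ cross term. But this leaves the $M_\rho$--$M_\rho$ interaction
\[
\int_{\RN}\mathds{1}_{M_\rho}z_n\,(\mathbf{R}-\mathbf{R}_0)\bigl(\mathds{1}_{M_\rho}z_n\bigr)\,dx
\]
completely uncontrolled. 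Proposition~\ref{prop:lokkomp} gives only \emph{local} compactness of $\mathbf{R}-\mathbf{R}_0$; there is no global compactness, and $z_n\weakto 0$ does not force this term to vanish. Without tightness of $(z_n)_n$, the global inequality you are after simply cannot be deduced from the tools at hand, and you explicitly acknowledge that tightness is unavailable. The paper avoids this problem by never working globally: it starts from the \emph{local} identity of Lemma~\ref{lem:PS_Folgen}(iv),
\[
\|\mathds{1}_{B_r}(w_n-w)\|_{\crid}^{\crid}=\int_{\RN}\mathds{1}_{B_r}(w_n-w)\,\mathbf{A}_{\wt Q}(w_n-w)\,dx+o(1),
\]
splits the right-hand side into $\mathds{1}_{B_r}$--$\mathds{1}_{B_r}$ and $\mathds{1}_{B_r}$--$\mathds{1}_{M_r}$ pieces, and applies HLS plus the local compactness only to the first piece; the only interaction with $M_r$ that ever appears is the genuine cross term, which Lemma~\ref{lem:technisch} handles. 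This yields $\mathds{1}_{B_r}w_n\to\mathds{1}_{B_r}w$ strongly in $L^{\crid}$, and then the nonvanishing bound forces $w\neq 0$ directly.

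There is a second gap in the unbounded $(x_n)$ case. The assertion that the translated sequence $\tilde v_n=v_n(\cdot+y_n)$ has weak limit ``still zero'' is unjustified: the weak limit of $v_n$ being $0$ says nothing about the weak limit of $v_n(\cdot+y_n)$ when $|y_n|\to\infty$. In the paper's argument this weak limit $w$ is in general nonzero and is a critical point of $J_{Q_{\text{per}}}$, not of $J_Q$. One then needs a further step, absent from your sketch: pass from $w$ to $\wt w:=(Q_{\text{per}}/Q)^{1/\crit}w$, compare energies using $|\wt w|\leq |w|$ and $\int \wt w\,\mathbf{A}_Q\wt w\,dx=\int w\,\mathbf{A}_{Q_{\text{per}}}w\,dx$, and finally argue that $\wt w$ itself is a critical point of $J_Q$ at level $L_Q$. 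Without this transfer you would only obtain a critical point of the periodic functional.
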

\begin{proof}
Since $(v_n)_n$ is a (PS)$_\beta$--sequence for $J_Q$ with $\beta>0$, 
it is bounded (see \cite[Lemma 4.2]{Evequoz2015}). 
Using Lemma~\ref{lem:PS_Folgen}, we find
$$
\lim_{n\to\infty}\int_{\RN}Q^{\frac1{\crit}}v_n \textbf{R}\bigl(Q^{\frac1{\crit}}v_n\bigr)\dx
=\left(\frac1{\crid}-\frac12\right)^{-1}\lim_{n\to\infty}\left[ J_Q(v_n)-\frac{1}{\crid}J'_Q(v_n)v_n\right]=N\beta>0.
$$
Hence, the nonvanishing theorem, Theorem~\ref{nonvanishing}, gives the existence of a sequence $(x_n)_n\subset\RN$
and constants $R, \zeta>0$ such that, up to a subsequence,
\begin{equation}\label{eqn:nonvanish_vn}
\int_{B_R(x_n)}|v_n|^{\crid}\dx 
\geq \|Q\|_\infty^{-\frac{\crid}{\crit}}\int_{B_R(x_n)}|Q^{\frac1{\crit}}v_n|^{\crid}\dx
\geq \|Q\|_\infty^{-\frac{\crid}{\crit}}\zeta>0,\quad\text{for all }n.
\end{equation}
Moreover, we may assume that $(x_n)_n\subset\Z^N$ by making $R$ larger if necessary. 
We now distinguish two cases.

{\bf Case 1}: $(x_n)_n$ is bounded. In this case, making $R$ again larger if necessary, 
we can assume that \eqref{eqn:nonvanish_vn} holds with $x_n=0$ for all $n$,
and we set $w_n:=v_n$ for all $n$ and $\wt{Q}:=Q$.

{\bf Case 2}: $|x_n|\to\infty$, for a subsequence. 
We restrict to this subsequence, setting $w_n:=v_n(\cdot+x_n)$ for all $n$ and $\wt{Q}:=Q_{\text{per}}$.

We want to apply Lemma \ref{lem:PS_Folgen} and therefore need to check that in both cases 
$ \left\|J_{\wt{Q}}'(w_{n})\mathds{1}_{B_{r}}\right\|_{\ast} \rightarrow 0 $ as $n \to \infty$, for all $r>0$. 
This is obvious in the first case, since $(v_{n})_{n}$ is a (PS)$_{\beta}$--sequence. 
For the second case, observe that for $r>0$ and $\varphi\in \ceinftyc(\RN)$, we have
\begin{equation}\label{eqn:Jprime_loc}
\begin{aligned}
J_{Q_{\text{per}}}'(w_n)\mathds{1}_{B_r}\varphi
&=J_{Q}'(v_n)\mathds{1}_{B_r(x_n)}\varphi(\cdot-x_n) 
+\int\limits_{\RN}\mathds{1}_{B_r}\varphi 
(\textbf{A}_{Q(\cdot+x_n)}-\textbf{A}_{Q_{\text{per}}(\cdot+x_n)})w_n\dx,
\end{aligned}
\end{equation}
using the fact that $Q_{\text{per}}$ is invariant under $\Z^N$-translations. 
Since $(v_n)_n$ is a Palais-Smale sequence, the first term goes to zero uniformly
for $\|\varphi\|_{\crid}\leq1$. The second term can be estimated as follows.
\begin{equation}\label{eqn:Jprime_quadratic}
\begin{aligned}
&\left|\ \int\limits_{\RN}\mathds{1}_{B_r}\varphi (\textbf{A}_{Q(\cdot+x_n)}-\textbf{A}_{Q_{\text{per}}(\cdot+x_n)})w_n\dx\right|\\
&\qquad=\left|\ \int\limits_{\RN}\mathds{1}_{B_r}\varphi (Q^{\frac1{\crit}}(\cdot+x_n)-Q_{\text{per}}^{\frac1{\crit}}(\cdot+x_n))
\textbf{R}\left[(Q^{\frac1{\crit}}(\cdot+x_n)+Q_{\text{per}}^{\frac1{\crit}}(\cdot+x_n))w_n\right]\dx\right|\\
&\qquad\leq 
2C_0 \|Q\|_\infty^{\frac1{\crit}} \left\|\varphi\right\|_{\crid} \|w_n\|_{\crid}
\left\|\mathds{1}_{B_r}(Q^{\frac1{\crit}}(\cdot+x_n)-Q_{\text{per}}^{\frac1{\crit}}(\cdot+x_n))\right\|_\infty,
\end{aligned}
\end{equation}
where $C_0>0$ is given by \eqref{eqn:c0}.
Moreover, by assumption, $Q(x)-Q_{\text{per}}(x)=Q_0(x)\to 0$ as $|x|\to\infty$. Thus,
since $|x_n|\to\infty$ as $n\to\infty$ and $Q, Q_{\text{per}}\geq 0$ are bounded functions, it follows that
\begin{equation}\label{eqn:asympt_QQp}
\|\mathds{1}_{B_r}(Q^{\frac1{\crit}}(\cdot+x_n)-Q_{\text{per}}^{\frac1{\crit}}(\cdot+x_n))\|_\infty\to 0, 
\quad\text{as }n\to\infty,\text{ for all }r>0.
\end{equation}
Combining \eqref{eqn:Jprime_loc}, \eqref{eqn:Jprime_quadratic} and \eqref{eqn:asympt_QQp}, we find that
$\|J_{Q_{\text{per}}}'(w_n)\mathds{1}_{B_r}\|_\ast\to 0$, as $n\to\infty$. 
Therefore the conditions of Lemma \ref{lem:PS_Folgen} are fulfilled in both cases
and, going to a subsequence, we obtain $w_n\weakto w$ in $L^{\crid}(\RN)$ and $w_n\to w$ a.e. on $\R^N$, 
for some $w\in L^{\crid}(\RN)$ which satisfies $J_{\wt{Q}}'(w)=0$.

Furthermore, from \eqref{eqn:BLnormintegral} we infer that, as $n\to\infty$,
\begin{equation}\label{eqn:c_loc}
\begin{aligned}
	\left\| \mathds{1}_{B_{r}}(w_{n}-w)\right\|^{\crid}_{\crid} 
	&= \int\limits_{\RN}\mathds{1}_{B_{r}}(w_{n} - w) \textbf{A}_{\wt{Q}}(w_{n} - w) \dx + o(1) \\
	&=\int\limits_{\RN}\mathds{1}_{B_{r}}(w_{n} - w) \textbf{A}_{\wt{Q}}\bigl[\mathds{1}_{B_{r}}(w_{n} - w)\bigr] \dx\\ 
	&\qquad\qquad+ \int\limits_{\RN}\mathds{1}_{B_{r}}(w_{n} - w) \textbf{A}_{\wt{Q}}\bigl[\mathds{1}_{M_{r}}(w_{n} - w)\bigr] \dx 
	+ o(1),
\end{aligned}
\end{equation}
for all $r>0$, where $M_r:=\RN\backslash B_r$.

For the first integral we obtain  with Proposition \ref{prop:lokkomp} and the characterization \eqref{eqn:MP_crit} of $L_Q^\ast$,
\begin{align*}
	&\int\limits_{\RN}  \mathds{1}_{B_{r}}(w_{n} - w) \textbf{A}_{\wt{Q}}\bigl[\mathds{1}_{B_{r}}(w_{n} - w)\bigr] \dx \\
	&\qquad=
	\int\limits_{\RN} \mathds{1}_{B_{r}}(w_{n} - w) \textbf{G}_{\wt{Q}}\bigl[\mathds{1}_{B_{r}}(w_{n} - w)\bigr] \dx 
	+ \intrn \mathds{1}_{B_{r}}(w_{n} - w) \left(\textbf{A}_{\wt{Q}} - \textbf{G}_{\wt{Q}}\right)
	\bigl[\mathds{1}_{B_{r}}(w_{n} - w)\bigr] \dx   \\
	&\qquad\leq
	\|Q\|_\infty^{\frac2{\crit}}\int\limits_{\RN} \mathds{1}_{B_{r}}|w_{n} - w|\textbf{R}_0\bigl[\mathds{1}_{B_{r}}|w_{n}-w|\bigr] \dx 
	+ o(1) \\
	&\qquad\leq 
	(N L_Q^\ast)^{-\frac2N}\left\|\mathds{1}_{B_{r}}(w_{n} - w)\right\|^{2}_{\crid} + o(1),\quad\text{as }n\to\infty,
\end{align*}
where $\mathbf{G}_{\wt{Q}}$ is given by \eqref{eqn:BS} with $\wt{Q}$ in place of $Q$. 
In addition, the Br\'ezis-Lieb Lemma implies
\begin{align*}
	\left\| \mathds{1}_{B_{r}}(w_{n} - w)\right\|^{\crid}_{\crid} &\leq \left\|w_{n} - w \right\|^{\crid}_{\crid} 
	 = \left\|w_{n} \right\|^{\crid}_{\crid} - \left\| w \right\|^{\crid}_{\crid} + o(1)  \\	
       &\leq \left\| v_{n} \right\|^{\crid}_{\crid} + o(1)
	 =\left( \frac{1}{\crid} - \frac{1}{2}\right)^{-1} \left(J_{Q}(v_{n}) - \frac{1}{2}J_{Q}'(v_{n})v_{n}\right) + o(1) \\
       &= N\beta + o(1),\quad\text{as }n\to\infty,
\end{align*}
since $(v_{n})_{n}$ is a $(PS)_{\beta}$-sequence by assumption.
Combining these two estimates, we obtain
\begin{align*}
	 \left[ 1 - \left( \frac{\beta}{L_Q^\ast}\right)^{\frac{2}{N}}\right]
	 &\left\| \mathds{1}_{B_{r}}(w_{n} - w)\right\|^{\crid}_{\crid} 
	 \leq
	  \left\| \mathds{1}_{B_{r}}(w_{n}-w)\right\|^{\crid}_{\crid} - 	(N L_Q^\ast)^{-\frac2N}
	  \left\|\mathds{1}_{B_{r}}(w_{n} - w)\right\|^{2}_{\crid} +o(1)\\
	 &\qquad\qquad\leq
	  \left\| \mathds{1}_{B_{r}}(w_{n}-w)\right\|^{\crid}_{\crid} 
	  - \int\limits_{\RN}\mathds{1}_{B_{r}}(w_{n} - w) \textbf{A}_{\wt{Q}}
	  \bigl[\mathds{1}_{B_{r}}(w_{n} - w)\bigr] \dx +o(1),
\end{align*}
as $n\to\infty$, and \eqref{eqn:c_loc} gives for all $r>0$,
\begin{equation}\label{eqn:estim_norm_HLS_asympt}
\left[ 1 - \left( \frac{\beta}{L_Q^\ast} \right)^{\frac{2}{N}}\right]
	\left\| \mathds{1}_{B_{r}}(w_{n} - w)\right\|^{\crid}_{\crid} 
\leq \int\limits_{\RN}\mathds{1}_{B_{r}}(w_{n} - w) \textbf{A}_{\wt{Q}}
	\bigl[\mathds{1}_{M_{r}}(w_{n} - w)\bigr] \dx + o(1),
\quad\text{as }n\to\infty,
\end{equation}
where the factor on the left-hand side is strictly positive, 
since we are assuming $\beta<L_Q^\ast$.

Let us now suppose by contradiction that $\left(\mathds{1}_{B_{r}}w_{n}\right)_{n}$ does not converge strongly
to $\mathds{1}_{B_{r}}w$ in $L^{\crid}(\RN)$, for some fixed $r > 0$. 
Then, passing to a subsequence there exists $\varepsilon > 0$ such that 
$$  
\left[ 1 - \left(   \frac{\beta}{L_Q^\ast} \right)^{\frac{2}{N}}\right]
\left\| \mathds{1}_{B_{r}}(w_{n} - w)\right\|^{\crid}_{\crid} > \varepsilon 
\qquad \text{for all }n.
$$
Lemma \ref{lem:technisch} applied to the sequence $\left(\wt{Q}^{\frac{1}{\crit}}(w_{n} - w) \right)_{n}$ gives 
$\rho_\eps>0$ such that for all $\rho\geq\max\{\rho_\eps,r\}$, we have
\begin{align*}
\varepsilon &> 
	\liminf\limits_{n\to\infty}\left|\ \intrn \mathds{1}_{B_{\rho}}(w_{n} - w) \textbf{A}_{\wt{Q}} 
		\bigl[\mathds{1}_{M_{\rho}}(w_{n} - w)\bigr] \dx\right| \\
	&\geq
	\liminf\limits_{n\to\infty} \left[ 1 - \left(   \frac{\beta}{L_Q^\ast} \right)^{\frac{2}{N}}\right]
		\left\| \mathds{1}_{B_{\rho}}(w_{n} - w)\right\|^{\crid}_{\crid},
	\qquad\text{using }\eqref{eqn:estim_norm_HLS_asympt} \\
	&\geq
	\liminf\limits_{n\to\infty} \left[ 1 - \left(  \frac{\beta}{L_Q^\ast} \right)^{\frac{2}{N}}\right]
		\left\| \mathds{1}_{B_{r}}(w_{n} - w)\right\|^{\crid}_{\crid} \\
	&\geq \varepsilon.
\end{align*}
This contradiction proves the strong convergence $\mathds{1}_{B_{r}}w_{n} \to \mathds{1}_{B_{r}}w$ in $L^{\crid}(\RN)$ 
as $n \to \infty$, for all $r>0$.
Using \eqref{eqn:nonvanish_vn} we immediately deduce $w \neq 0$. Moreover, 
\begin{equation}\label{eqn:MP_tilde}
\begin{aligned}
	J_{\wt{Q}}(w)=J_{\wt{Q}}(w)-\frac12J_{\wt{Q}}'(w)w=\frac1N\|w\|_{\crid}^\crid 
	&\leq \liminf_{n\to\infty}\frac1N\|w_n\|_{\crid}^\crid 
	= \liminf_{n\to\infty}\frac1N\|v_n\|_{\crid}^\crid\\ 
	&= \liminf_{n\to\infty}\bigl[J_{Q}(v_n)-\frac12J_{Q}'(v_n)v_n\bigr]=\beta=L_Q.
\end{aligned}
\end{equation}
In Case 1, the characterization~\eqref{eqn:MP_equivalent} of the mountain-pass level yields $J_Q(w)=L_Q$, 
and the Proposition is proven.
	
In Case 2, we consider the function 
$$
\wt{w}:=\left(\frac{Q_{\text{per}}}{Q}\right)^{\frac1{\crit}}w.
$$ 
Since $Q=Q_{\text{per}}+Q_0$ with $Q_0\geq 0$, we find that $|\wt{w}|\leq |w|$. 
In particular, we have $\wt{w}\in L^{2^+}(\R^N)$ and by definition,
$$
\int\limits_{\RN}\wt{w}\textbf{A}_Q\wt{w}\dx = \int\limits_{\RN}w\textbf{A}_{Q_{\text{per}}}w\dx=\|w\|_{\crid}^{\crid}>0,
$$
since $w$ is a nontrivial critical point of $J_{\wt{Q}}=J_{Q_{\text{per}}}$. Hence, $\wt{w}\neq 0$ and, setting
$$
\tau:=\left(\frac{\int\limits_{\RN}|\wt{w}|^{\crid}\dx}{\int\limits_{\RN}\wt{w}\textbf{A}_Q\wt{w}\dx}\right)^{\frac1{2-\crid}},
$$
we find that $0<\tau\leq 1$ and $J_Q'(\tau\wt{w})\wt{w}=0$. In addition, since $|\wt{w}|\leq |w|$ we have
\begin{align*}
J_Q(\tau\wt{w})=\frac1N\left(\frac{\|\wt{w}\|_{\crid}^2}{\int\limits_{\RN}\wt{w}\textbf{A}_Q\wt{w}\dx}\right)^{\frac{N}2}
\leq \frac1N\left(\frac{\|w\|_{\crid}^2}{\int\limits_{\RN}w\textbf{A}_{Q_{\text{per}}}w\dx}\right)^{\frac{N}2}
= J_{Q_{\text{per}}}(w).
\end{align*}
Therefore, \eqref{eqn:MP_equivalent} and \eqref{eqn:MP_tilde} yield $J_Q(\tau\wt{w})=L_Q=J_{Q_{\text{per}}}(w)$ 
and we deduce that $\tau=1$. 
We now claim that $\tau\wt{w}=\wt{w}$ is a critical point for $J_Q$.
To prove this, let $\varphi\in L^{\crid}(\RN)$ be arbitrarily given and choose $\delta>0$ such that 
$$
\int\limits_{\RN}(\wt{w}+s\varphi)\textbf{A}_Q(\wt{w}+s\varphi)\dx>0, \quad\text{for all }s\in[-\delta,\delta], 
$$
and, for $s\in[-\delta,\delta]$, set
$$
t_s:=\left(\frac{\int\limits_{\RN}|\wt{w}+s\varphi|^{\crid}\dx}{\int\limits_{\RN}(\wt{w}+s\varphi)\textbf{A}_Q(\wt{w}+s\varphi)\dx}\right)^{\frac1{2-\crid}}.
$$
Then we can write, using \eqref{eqn:MP_equivalent}, the property $J_Q(\wt{w})=J_Q(\tau\wt{w})=\max\limits_{t>0}J_Q(t\wt{w})$ 
and the mean-value theorem,
\begin{align*}
0\leq J_Q\bigl(t_s(\wt{w}+s\varphi)\bigr) - J_Q(\wt{w}) \leq J_Q\bigl(t_s(\wt{w}+s\varphi)\bigr) - J_Q(t_s\wt{w})
	=J_Q'\bigl(t_s(\wt{w}+s\sigma\varphi)\bigr)t_s s\varphi,
\end{align*}
for some $\sigma\in[-1,1]$. Dividing by $s\neq 0$ and letting $s\to 0^\pm$, we obtain $J_Q'(\wt{w})\varphi=0$, since $t_s\to 1$, as $s\to 0$.
This concludes the proof.
\end{proof}

\begin{remark}
(i) When $Q\equiv Q_0$, Case 2 does not occur in the proof above. Moreover, the operator $\mathbf{A}_Q-\mathbf{G}_Q$ is
itself compact, so that all arguments in the proof hold globally on $\RN$. As a consequence,
$J_Q$ satisfies the Palais-Smale condition at every level $0<\beta<L_Q^\ast$.

(ii) In the case where $Q\equiv Q_{\text{per}}$, we have $\wt{w}=w$ in the above proof and the Proposition is valid for any 
$0<\beta<L_Q^\ast$, except for the last assertion which should be replaced with $L_Q\leq J_Q(w)\leq\beta$.
\end{remark}

\subsection{Estimating the dual mountain-pass level}
Our next result shows that in dimension $N\geq 4$, the mountain pass level $L_Q$ lies below the critical 
threshold $L_Q^\ast$ if the coefficient $Q$ satisfies some flatness condition (see condition (Q) below). 
This additional condition seems to go back to the works of Escobar \cite{Escobar87} 
and Egnell \cite{Egnell88} (see also \cite[Remark 1.2]{Chabrowski-Szulkin2002}).

To estimate $L_Q$, we shall use the functions
\begin{equation}\label{eqn:dual_inst}
v_\eps(x):=(N(N-2)\eps)^{\frac{N+2}{4}}\left(\frac{1}{\eps + |x|^2}\right)^{\frac{N+2}{2}}, \quad \eps>0.
\end{equation}
It was shown by Lieb \cite{Lieb83} (see also \cite[Theorem 4.3]{Lieb_Loss2001}) that, up to translation and
multiplication by a constant, $v_\eps$, $\eps>0$ are the only optimizers of the Hardy-Littlewood-Sobolev 
inequality \eqref{eqn:HLS_inequ}, i.e.,
\begin{equation}\label{eqn:HLS_optimal}
\int\limits_{\RN}v_\eps \mathbf{R}_0 v_\eps \dx = S^{-1}\|v_\eps\|_{\crid}^2.
\end{equation}
In addition, we have $v_\eps=u_\eps^{\crit-1}$, where 
\begin{equation*}
u_{\eps}(x) := (N(N-2)\eps)^{\frac{N-2}{4}}\left(\frac{1}{\eps + |x|^2}\right)^{\frac{N-2}{2}}, \quad \eps>0,
\end{equation*}
are the Aubin-Talenti instantons (see, e.g., \cite{Brezis-Nirenberg83}) that optimize 
the Sobolev inequality \eqref{eqn:Sobolev_inequ} 
and for which the following holds:
$$
\left\| \nabla u_{\eps}\right\|^{2}_{L^{2}(\RN)} = \left\|u_{\eps}\right\|^{\crit}_{\crit} = S^\frac{N}{2}, \quad\text{for all }\eps>0.
$$
In particular, we deduce that
\begin{equation}\label{eqn:norm_dual_instanton}
\|v_\eps\|_{\crid}=\|u_\eps\|_{\crit}^{\crit-1}=S^{\frac{N+2}4}, \quad\text{for all }\eps>0.
\end{equation}
\begin{proposition}\label{prop:MP_level}
Let $N\geq 4$ and consider $Q \in L^{\infty}(\RN) \setminus\{0\}$ nonnegative. Assume further, 
that there exists $x_{0} \in \RN$ with $ Q(x_{0}) = \left\|Q\right\|_\infty$ and that
\begin{equation}
\label{Q}\tag{Q}
	Q(x_0) - Q(x) = o(|x - x_{0}|^2),\quad \text{ as } |x-x_0|\to 0.
\end{equation}
Then we have
$$
	L_{Q}< L_Q^\ast.
$$
\end{proposition}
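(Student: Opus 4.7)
The plan is to use the characterization \eqref{eqn:MP_equivalent} of $L_Q$ together with the explicit family $v_\eps$ from \eqref{eqn:dual_inst} centered at $x_0$, and to show that for some sufficiently small $\eps>0$,
\[
\int_{\RN} v_\eps(\cdot - x_0)\,\mathbf{A}_Q v_\eps(\cdot - x_0)\,dx \;>\; \|Q\|_\infty^{2/\crit}\, S^{N/2}.
\]
Indeed, by \eqref{eqn:norm_dual_instanton} we have $\|v_\eps\|_\crid^2 = S^{(N+2)/2}$, and combining this with \eqref{eqn:MP_crit_sobolev} and \eqref{eqn:MP_equivalent}, the above inequality is equivalent to $L_Q < L_Q^\ast$.

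After translating so that $x_0 = 0$, I split $\mathbf{R} = \mathbf{R}_0 + (\mathbf{R}-\mathbf{R}_0)$ to get $\int v_\eps \mathbf{A}_Q v_\eps\,dx = I_0(\eps) + I_1(\eps)$, with $I_0(\eps)=\int Q^{1/\crit} v_\eps\,\mathbf{R}_0(Q^{1/\crit} v_\eps)\,dx$ and $I_1(\eps)=\int Q^{1/\crit} v_\eps\,(\mathbf{R}-\mathbf{R}_0)(Q^{1/\crit} v_\eps)\,dx$. For the Laplace part, optimality of $v_\eps$ in the Hardy--Littlewood--Sobolev inequality \eqref{eqn:HLS_optimal} gives $\int v_\eps \mathbf{R}_0 v_\eps\,dx = S^{N/2}$, and positivity of the kernel together with $Q\leq\|Q\|_\infty$ yields the upper bound $I_0(\eps)\leq \|Q\|_\infty^{2/\crit} S^{N/2}$. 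The \emph{loss} $\|Q\|_\infty^{2/\crit} S^{N/2}-I_0(\eps)$ has the integral representation with integrand factor $\|Q\|_\infty^{2/\crit}-Q^{1/\crit}(x)Q^{1/\crit}(y)$, which I bound by $\|Q\|_\infty^{1/\crit}\bigl[(\|Q\|_\infty^{1/\crit}-Q^{1/\crit}(x))+(\|Q\|_\infty^{1/\crit}-Q^{1/\crit}(y))\bigr]$. Since $t\mapsto t^{1/\crit}$ is smooth at $Q(0)=\|Q\|_\infty>0$, condition \eqref{Q} gives $\|Q\|_\infty^{1/\crit}-Q^{1/\crit}(y) = o(|y|^2)$ as $|y|\to 0$. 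The rescaling $x=\sqrt\eps\, u$, $y=\sqrt\eps\, w$ converts the loss integral, after cancellations of the $\eps$-powers, into $\eps$ times an expression involving $V(z):=(N(N-2))^{(N+2)/4}(1+|z|^2)^{-(N+2)/2}$ that tends to $0$ by dominated convergence. Hence the loss is $o(\eps)$ as $\eps\to 0^+$.

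For the Helmholtz correction $I_1(\eps)$, I choose $r>0$ both small enough to apply Lemma \ref{lem:difference_fundsol}(i) and small enough that $Q^{1/\crit}(x)\geq \tfrac12\|Q\|_\infty^{1/\crit}$ on $B_r$. Restricting to $(x,y)\in B_{r/2}\times B_{r/2}$ (so that $|x-y|<r$) and invoking the \emph{positive} lower bound of Lemma \ref{lem:difference_fundsol}(i) gives
\[
I_1(\eps) \geq \tfrac{\kappa_1}{4}\,\|Q\|_\infty^{2/\crit}\!\!\iint_{B_{r/2}\times B_{r/2}}\!\! v_\eps(x) v_\eps(y)\, K_N(x-y)\,dx\,dy \;-\; \mathcal{E}(\eps),
\]
where $K_N(z)=|z|^{4-N}$ for $N\geq 5$ and $K_N(z)=\bigl|\ln|z|\bigr|$ for $N=4$, and the remainder $\mathcal{E}(\eps)$ gathers the contributions from $(B_{r/2}\times B_{r/2})^c$. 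These are controlled via the decomposition $\Psi=\Psi_1+\Psi_2$ used in the proof of Proposition \ref{prop:lokkomp}, which provides global pointwise decay of $\Psi-\Lambda$, combined with the sharp concentration of $v_\eps$ at the origin. The same rescaling as above yields $I_1(\eps)\geq c_0\,\eps + o(\eps)$ for $N\geq 5$ and $I_1(\eps)\geq c_0\,\eps|\ln\eps| + o(\eps|\ln\eps|)$ for $N=4$, with $c_0>0$ explicit.

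Putting the two steps together, $\int v_\eps\mathbf{A}_Q v_\eps\,dx\geq \|Q\|_\infty^{2/\crit}S^{N/2}+I_1(\eps)-o(\eps)$, and since $I_1(\eps)$ strictly dominates the loss as $\eps\to 0^+$, the required strict inequality holds for all sufficiently small $\eps>0$. The delicate point is the lower bound on $I_1(\eps)$: one must verify that replacing $Q^{1/\crit}$ by $\tfrac12\|Q\|_\infty^{1/\crit}$ on $B_r$ and restricting to $B_{r/2}\times B_{r/2}$ does not destroy the positive gain coming from Lemma \ref{lem:difference_fundsol}(i), whose opposite sign in dimension $N=3$ is precisely the obstruction behind part (ii) of Theorem \ref{thm:main}.
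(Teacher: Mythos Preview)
Your argument is correct and follows the same overall strategy as the paper's proof: test the characterization \eqref{eqn:MP_equivalent} with the dual instantons $v_\eps$ centered at $x_0$, extract a positive gain of order $\eps$ (resp.\ $\eps|\ln\eps|$ for $N=4$) from the lower bound in Lemma~\ref{lem:difference_fundsol}(i), and show that the discrepancy caused by the nonconstancy of $Q$ is $o(\eps)$ thanks to condition \eqref{Q}. The organisation, however, differs in two respects. First, the paper works with the truncated functions $v_{\eps,\alpha}=\varphi_\alpha v_\eps$, so that all kernels are only evaluated on balls of fixed radius; the cutoff error is then a simple $O(\eps^{(N+2)/4})$ term. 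You dispense with the cutoff entirely and instead control the contribution of $(B_{r/2}\times B_{r/2})^c$ to $I_1(\eps)$ via the global bound $\mathbf{R}-\mathbf{R}_0:L^{\crid}\to L^{\crit}$ from Proposition~\ref{prop:lokkomp}(i), together with $\|\mathds{1}_{B_{r/2}^c}v_\eps\|_{\crid}=O(\eps^{(N+2)/4})$. Second, the paper absorbs the $Q$-vs-$\|Q\|_\infty$ defect into the Helmholtz side, estimating $\int v_{\eps,\alpha}(\mathbf{A}_q-\mathbf{A}_Q)v_{\eps,\alpha}\,dx$ via the resolvent bound \eqref{eqn:c0}, whereas you absorb it into the Laplace side, using positivity of the kernel $\Lambda$ to write the loss as a double integral and bounding $q^{2/\crit}-Q^{1/\crit}(x)Q^{1/\crit}(y)$ by a sum of one-variable terms. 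Your dominated-convergence step is legitimate: writing $h(y)=q^{1/\crit}-Q^{1/\crit}(y)$, the ratio $h(\sqrt\eps\, w)/(\eps|w|^2)$ is uniformly bounded (by the $O(|y|^2)$ bound near $0$ and by $q^{1/\crit}/r_\delta^2$ once $|\sqrt\eps\, w|\geq r_\delta$), tends to $0$ pointwise, and is dominated by the integrable function $V(u)V(w)|u-w|^{2-N}|w|^2$ since $|\cdot|^2 V\in L^{\crid}(\RN)$. Both routes yield the same conclusion; yours is slightly more streamlined at the cost of appealing to the operator bound of Proposition~\ref{prop:lokkomp} for the tail control.
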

\begin{proof}[Proof of Proposition~\ref{prop:MP_level} ]
Let us assume -- without loss of generality -- that $x_{0} = 0$
and set $q:=\|Q\|_\infty$. 

We consider for $\eps>0$ the dual instanton $v_\eps$ given by \eqref{eqn:dual_inst} and
put $v:=v_1$.
Fix a cut-off function $\varphi \in \ceinftyc(\RN)$ with $0 \leq \varphi \leq 1$ on $\RN$,  
$\varphi \equiv 1$ on $B_{1}(0)$ and $\varphi \equiv 0$ outside of $B_{2}(0)$. 
Setting for $\eps>0$, $\alpha>0$, 
$$
v_{\eps,\alpha}:=\varphi_\alpha v_{\eps},\quad\text{where }\varphi_\alpha(x):=\varphi\bigl(\frac{x}{\alpha}\bigr),
$$
we shall estimate the ratio
$$ 
\frac{\left\|v_{\eps,\alpha}\right\|^{2}_{\crid}}{\int\limits_{\RN}v_{\eps,\alpha}\textbf{A}_{Q}v_{\eps,\alpha}\dx}, 
$$
and we first look at the quadratic form $\int\limits_{\RN} v_{\eps,\alpha}\textbf{A}_{Q}v_{\eps,\alpha}\dx$. 
Consider the decomposition
\begin{equation}\label{eqn:decomp_bilinear}
\begin{aligned}
\int\limits_{\R^{N}}v_{\eps,\alpha}\textbf{A}_Qv_{\eps,\alpha}~dx 
&= \int\limits_{\R^{N}}v_{\eps}\textbf{G}_qv_{\eps}~dx 
-\int\limits_{\R^{N}}(1+\varphi_\alpha)v_{\eps}\textbf{G}_q\bigl((1-\varphi_\alpha)v_{\eps}\bigr)~dx\\
&\quad + \int\limits_{\R^{N}}v_{\eps,\alpha}\bigl(\textbf{A}_q-\textbf{G}_q\bigr)v_{\eps,\alpha}~dx 
- \int\limits_{\R^{N}}v_{\eps,\alpha}\bigl(\textbf{A}_q-\textbf{A}_Q\bigr)v_{\eps,\alpha}~dx, 
\end{aligned}
\end{equation}
with $\textbf{G}_q$ as in \eqref{eqn:BS} where $Q$ is replaced by the constant function $q$, i.e., 
$\textbf{G}_q=q^{\frac1{\crit}}\textbf{R}_0q^{\frac1{\crit}}$.
Starting with the first integral in the right-hand side of \eqref{eqn:decomp_bilinear}, we remark that \eqref{eqn:HLS_optimal}
and \eqref{eqn:norm_dual_instanton} together with the definition of $\mathbf{G}_q$ give
\begin{equation}\label{eqn:K2}
\begin{aligned}
\int\limits_{\RN}v_{\eps}\textbf{G}_qv_{\eps} \dx 
=q^{\frac2{\crit}}\int\limits_{\RN} v_\eps\mathbf{R}_0 v_\eps \dx
=q^{\frac2{\crit}} S^{\frac{N}{2}}.
\end{aligned}
\end{equation}
Using the Hardy-Littlewood-Sobolev inequality,
the second integral in \eqref{eqn:decomp_bilinear} can be estimated as follows
\begin{align*}
\int\limits_{\R^{N}}(1+\varphi_\alpha)v_{\eps}\textbf{G}_q\bigl((1-\varphi_\alpha)v_{\eps}\bigr) \dx
&\leq q^{\frac2{\crit}}S^{-1}\left\|(1+\varphi_\alpha)v_{\eps}\right\|_\crid 
\left\|(1-\varphi_\alpha)v_{\eps}\right\|_\crid.
\end{align*}
Moreover, since $1-\varphi_\alpha=0$ in $B_\alpha(0)$, we obtain
\begin{align*}
\left\|(1-\varphi_\alpha)v_{\eps}\right\|^{\crid}_{\crid} 
\leq N\omega_N (N(N-2))^{\frac{N}{2}} \int\limits_{\frac{\alpha}{\sqe}}^\infty r^{-(N+1)}~dr 
=\omega_N(N(N-2))^{\frac{N}{2}} \alpha^{-N}\eps^{\frac{N}2}.
\end{align*}
Thus, from \eqref{eqn:norm_dual_instanton} and since $0\leq\varphi_\alpha\leq 1$, it follows that
\begin{equation}\label{eqn:G2_Ieps}
\begin{aligned}
\int\limits_{\R^{N}}(1+\varphi_\alpha)v_{\eps}\textbf{G}_q\bigl((1-\varphi_\alpha)v_{\eps}\bigr)\dx 
\leq 2q^{\frac2{\crit}}(\omega_N)^{\frac1{\crid}} S^{\frac{N-2}{4}}(N(N-2))^{\frac{N+2}{4}} \alpha^{-\frac{N+2}{2}}
\eps^{\frac{N+2}4}.
\end{aligned}
\end{equation}
The third integral in \eqref{eqn:decomp_bilinear} can be rewritten as
\begin{align*}
\int\limits_{\RN}v_{\eps,\alpha}\bigl(\textbf{A}_q-\textbf{G}_q\bigr)v_{\eps,\alpha}\dx
&=q^{\frac{2}{\crit}}\int\limits_{\RN}v_{\eps,\alpha}\bigl[(\Psi-\Lambda)\ast v_{\eps,\alpha}\bigr]\dx\\
&=q^{\frac{2}{\crit}}\int\limits_{\RN}\int\limits_{\RN}v_{\eps}(x)v_{\eps}(y)\varphi_\alpha(x)\varphi_\alpha(y)
\bigl[\Psi(x-y)-\Lambda(x-y)\bigr] \dy\dx.
\end{align*}
Since $\varphi_\alpha(x)=0$ for all $|x|\geq 2\alpha$, it is enough to estimate the difference $\Psi-\Lambda$
inside $B_{4\alpha}(0)$. Fixing $\alpha_0\in(0,\frac14 y_0)$ and observing that $y_{\nu}<y_{\nu+1}$ for $\nu\geq 0$, 
we obtain from Lemma~\ref{lem:difference_fundsol} a constant $\kappa_0>0$ such that
\begin{align*}
\Psi(z)-\Lambda(z) \geq \left\{\begin{array}{ll} \kappa_0 |z|^{4-N},& \text{if }N\geq 5,\\ \\
								         \kappa_0 \bigl|\ln|z|\, \bigr|, & \text{if }N=4,
						\end{array}\right.
\quad\text{for all }0<|z|\leq 4\alpha_0.
\end{align*}
As a consequence, and since $\varphi_\alpha\equiv 1$ in $B_\alpha$, we can write for all $0<\alpha\leq \alpha_0$
and $0<\eps\leq\alpha^2$:
\begin{align*}
\int\limits_{\RN}v_{\eps,\alpha}\bigl(\textbf{A}_q-\textbf{G}_q\bigr)v_{\eps,\alpha}\dx
&\geq \kappa_0 q^{\frac{2}{\crit}}\int\limits_{B_\alpha}\int\limits_{B_\alpha}v_{\eps}(x)v_{\eps}(y)
|x-y|^{4-N} \dy\dx\\
&=\eps\kappa_0 q^{\frac{2}{\crit}}\int\limits_{B_{\frac{\alpha}{\sqrt{\eps}}}}\int\limits_{B_{\frac{\alpha}{\sqrt{\eps}}}}v(x)v(y)
|x-y|^{4-N} \dy\dx\\
&\geq \eps 2^{4-N}\kappa_0 q^{\frac{2}{\crit}}\left(\int\limits_{B_1}v(x)\, dx\right)^2,
\end{align*}
in the case where $N\geq 5$. In a similar way, we obtain for $N=4$,
\begin{equation}\label{eqn:diff_N4}
\int\limits_{\RN}v_{\eps,\alpha}\bigl(\textbf{A}_q-\textbf{G}_q\bigr)v_{\eps,\alpha}\dx
\geq \eps|\ln(2\sqrt{\eps})|\kappa_0 q^{\frac{2}{\crit}}\left(\int\limits_{B_1}v(x)\, dx\right)^2.
\end{equation}
Setting $\gamma:= 2^{4-N}\kappa_0 q^{\frac{2}{\crit}}\left(\int\limits_{B_1}v(x)\, dx\right)^2$, the above computations
yield
\begin{equation}\label{eqn:H1}
\int\limits_{\RN}v_{\eps,\alpha}\bigl(\textbf{A}_q-\textbf{G}_q\bigr)v_{\eps,\alpha}\dx
\geq \gamma \eps, 
\qquad\text{for all }0<\alpha\leq\alpha_0\text{ and }0<\eps\leq\min\bigl\{\alpha^2,\frac{e^{-2}}4\bigr\}.
\end{equation}
To estimate the remaining integral in \eqref{eqn:decomp_bilinear}, we first note that, since $0\leq \frac{Q(x)}{q}\leq 1$, we have
$$
0\leq q^{\frac{1}{\crit}}-Q^{\frac{1}{\crit}}(x)\leq q^{\frac1{\crit}-1}\bigl(q-Q(x)\bigr),\quad\text{ for all }x.
$$
Thus, the assumption \eqref{Q} gives
for each $\delta>0$ a constant $\alpha_\delta>0$ such that
$$
0\leq q^{\frac{1}{\crit}}-Q^{\frac{1}{\crit}}(x)\leq \frac{\delta}{2} |x|^2, \quad\text{for all }|x|\leq 2\alpha_\delta.
$$
Since $\varphi_{\eps,\alpha}\equiv 0$ outside $B_{2\alpha}$, we find for $0<\alpha\leq\alpha_\delta$ and $0<\eps\leq\alpha^2$,
\begin{align*}
\int\limits_{\RN}v_{\eps,\alpha}\bigl(\textbf{A}_q-\textbf{A}_Q\bigr)v_{\eps,\alpha}\dx
&= \int\limits_{\RN}(q^{\frac{1}{\crit}}-Q^{\frac{1}{\crit}})v_{\eps,\alpha}\textbf{R}\bigl[(q^{\frac{1}{\crit}}+Q^{\frac{1}{\crit}})v_{\eps,\alpha}\bigr]\dx\\
&\leq 2q^{\frac{1}{\crit}} C_0 \bigl\| (q^{\frac{1}{\crit}}-Q^{\frac{1}{\crit}})v_{\eps,\alpha} \bigr\|_{\crid} \|v_{\eps,\alpha}\|_{\crid}\\
&\leq \delta\eps q^{\frac{1}{\crit}} S^{\frac{N+2}{4}} C_0\left(\int_{\RN}\Bigl|\, |x|^2 v(x)\Bigr|^{\crid}\, dx\right)^{\frac1{\crid}}.
\end{align*}
Choosing $\delta>0$ such that 
$\delta q^{\frac{1}{\crit}} S^{\frac{N+2}{4}} 
C_0\left(\int_{\RN}\Bigl|\, |x|^2 v(x)\Bigr|^{\crid}\, dx\right)^{\frac1{\crid}}\leq \frac{\gamma}{2}$
and setting $\alpha:=\min\{\alpha_0,\alpha_\delta\}$, we obtain the estimate
\begin{equation}\label{eqn:H2}
\int\limits_{\RN}v_{\eps,\alpha}\bigl(\textbf{A}_q-\textbf{A}_Q\bigr)v_{\eps,\alpha}\dx \leq \frac{\gamma}{2}\eps,
\quad\text{ for all }0<\eps\leq\alpha^2.
\end{equation}
With this choice of $\alpha$, putting the estimates \eqref{eqn:K2}, \eqref{eqn:G2_Ieps}, \eqref{eqn:H1} and \eqref{eqn:H2}
together, the decomposition \eqref{eqn:decomp_bilinear} yields
\begin{equation}\label{eqn:bilin_abschaetzung}
\begin{aligned}
\int\limits_{\RN}v_{\eps,\alpha}\textbf{A}_Qv_{\eps,\alpha}\dx 
&\geq q^{\frac2{\crit}} S^{\frac{N}2} + \frac{\gamma}{2}\eps -\zeta\eps^{\frac{N+2}4}
\geq q^{\frac2{\crit}} S^{\frac{N}2} + \frac{\gamma}{4}\eps\\
&>q^{\frac2{\crit}} S^{\frac{N}2},
\quad\text{for }0<\eps\leq\eps_0:=\min\bigl\{\alpha^2,\bigl(\frac{\gamma}{4\zeta}\bigr)^{\frac{4}{N-2}}, \frac{e^{-2}}{4}\bigr\}, 
\end{aligned}
\end{equation}
where $\zeta=2q^{\frac2{\crit}} (\omega_N)^{\frac1{\crit}} S^{\frac{N-2}{4}}(N(N-2))^{\frac{N+2}{4}} \alpha^{-\frac{N+2}{2}}$.
Hence, from \eqref{eqn:MP_equivalent}, \eqref{eqn:MP_crit_sobolev}, \eqref{eqn:norm_dual_instanton} and 
\eqref{eqn:bilin_abschaetzung} we infer that 
for $\alpha=\min\{\alpha_0,\alpha_\delta\}$ and $0<\eps\leq\eps_0$,
$$
L_Q
\leq \frac1N\left( \frac{\left\|v_{\eps,\alpha}\right\|^{2}_{\crid}}{
\int\limits_{\RN}v_{\eps,\alpha}\textbf{A}_{Q}v_{\eps,\alpha}\dx}\right)^{\frac{N}{2}}
<\frac1N\left(\frac{S^{\frac{N+2}{2}}}{q^{\frac{2}{\crit}}S^{\frac{N}{2}}}\right)^{\frac{N}{2}}
=\frac{S^{\frac{N}{2}}}{N q^{\frac{N-2}{2}}}=L_Q^\ast.
$$
This proves the desired result.
\end{proof}

\begin{remark}\label{remq:N4}
In the case $N=4$, using the estimate \eqref{eqn:diff_N4} instead of \eqref{eqn:H1},
we see that the condition \eqref{Q} can be weakened to
$$
Q(x_0) - Q(x) = O(|x - x_{0}|^2),\quad \text{ as } |x-x_0|\to 0.
$$
\end{remark}

\subsection{Existence and nonexistence of dual ground states}
We are now in position to give the proof of our main existence result for the critical nonlinear Helmholtz equation.
\begin{theorem}
Let $N\geq 4$ and consider $Q\in L^\infty(\R^N)\backslash\{0\}$. 
Assume in addition that
\begin{itemize}
\item[(Q1)] $Q=Q_{\text{per}}+Q_0$, where $Q_{\text{per}}, Q_0\geq 0$ are such that $Q_{\text{per}}$ is $\Z^N$-periodic 
and $Q_0(x)\to 0$ as $|x|\to\infty$;
\item[(Q2)] there exists $x_{0} \in \RN$ with $Q(x_{0}) = \left\|Q\right\|_\infty$ and, as $|x-x_0|\to 0$,
$$
Q(x_0) - Q(x) = \begin{cases} o(|x - x_{0}|^2),\quad \text{if }N\geq 5, \\ \\ O(|x-x_0|^2), \quad\text{if }N=4.\end{cases} 
$$
\end{itemize}
Then the problem
\begin{equation}\label{eqn:nlh_critical}
-\Delta u -u=Q(x)|u|^{\crit-2}u, \quad u \in W^{2,\crit}(\RN)
\end{equation}
has a dual ground state.
\end{theorem}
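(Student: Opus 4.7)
The plan is simply to assemble the three key ingredients prepared in the preceding subsections. First, since $J_Q$ has the mountain pass geometry (MP1)--(MP2) on $L^{\crid}(\RN)$, the mountain pass theorem without Palais-Smale condition (e.g.\ via Ekeland's variational principle applied on the path space $\mathcal{P}$) yields a sequence $(v_n)_n\subset L^{\crid}(\RN)$ which is a (PS)$_{L_Q}$-sequence for $J_Q$. This step is purely topological.

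Next, I would establish the strict inequality $L_Q<L_Q^\ast$. For $N\geq 5$, this is exactly the content of Proposition~\ref{prop:MP_level}, whose hypothesis coincides with condition (Q2). For $N=4$, the estimate \eqref{eqn:diff_N4} contains the extra logarithmic factor $|\ln(2\sqrt{\eps})|$, which as noted in Remark~\ref{remq:N4} permits the weaker requirement $Q(x_0)-Q(x)=O(|x-x_0|^2)$ in (Q2); thus the strict inequality again holds. The heart of this step --- the positive quadratic-form contribution of $\mathbf{R}-\mathbf{R}_0$ on positive, small-support functions --- was provided by the lower bound in Lemma~\ref{lem:difference_fundsol}(i), which is specific to dimensions $N\geq 4$.

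Now, since $(v_n)_n$ is a (PS)$_{L_Q}$-sequence with $L_Q<L_Q^\ast$ and $Q$ satisfies the asymptotic periodicity condition (Q1), Proposition~\ref{prop:PS_unter_kritisch_Level} applies directly and produces a nontrivial $w\in L^{\crid}(\RN)$ with $J_Q'(w)=0$ and $J_Q(w)=L_Q$. Setting $u:=\mathbf{R}(Q^{\frac{1}{\crit}}w)$, we have from $J_Q'(w)=0$ that $w=Q^{\frac{1}{\crit}}|u|^{\crit-2}u$ and hence $u=\mathbf{R}(Q|u|^{\crit-2}u)$; Lemma~\ref{lem:4.3} then guarantees that $u\in W^{2,\crit}(\RN)$ is a strong solution of \eqref{eqn:nlh_critical}. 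Because $w$ is a critical point of $J_Q$ at the level $L_Q$, and $L_Q$ coincides with the least-energy level among nontrivial critical points by the characterization \eqref{eqn:MP_equivalent}, the solution $u$ is by definition a dual ground state.

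All of the genuine difficulty --- the loss of compactness at the critical exponent, the local compactness of $\mathbf{R}-\mathbf{R}_0$, the nonvanishing property needed to handle the periodic part of $Q$, and the threshold estimate via the dual instantons $v_\eps$ --- has already been absorbed into the earlier results, so at this stage the proof is a short bookkeeping argument that combines them. The only subtle point worth spelling out is the equality $J_Q(w)=L_Q$ in the asymptotically periodic case: in Case 2 of the proof of Proposition~\ref{prop:PS_unter_kritisch_Level} the critical point is produced first for $J_{Q_{\text{per}}}$ and then transferred to $J_Q$ via the rescaling $\wt{w}=(Q_{\text{per}}/Q)^{1/\crit}w$, and verifying that this transferred function is itself critical for $J_Q$ (not merely a constrained minimizer) is the place where one really uses the mountain-pass characterization \eqref{eqn:MP_equivalent}.
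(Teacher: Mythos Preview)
Your proposal is correct and follows essentially the same approach as the paper's own proof: obtain a (PS)$_{L_Q}$-sequence from the mountain pass theorem without Palais-Smale condition, invoke Proposition~\ref{prop:MP_level} together with Remark~\ref{remq:N4} for the strict inequality $L_Q<L_Q^\ast$, apply Proposition~\ref{prop:PS_unter_kritisch_Level} to extract the critical point $w$ at level $L_Q$, and set $u=\mathbf{R}(Q^{1/\crit}w)$. Your additional commentary on the Case~2 transfer step and on why Lemma~\ref{lem:difference_fundsol}(i) is the decisive ingredient is accurate and does not deviate from the paper's argument.
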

\begin{proof} 
Using the mountain pass theorem without the Palais-Smale condition (see \cite{Ambrosetti-Rabinowitz73} and
 \cite[Theorem 2.2]{Brezis-Nirenberg83}) we obtain the existence of a Palais-Smale sequence 
$(v_{n})_{n} \subset L^{\crid}(\RN)$ at the mountain pass level $L_{Q}$.
Therefore, by Proposition~\ref{prop:PS_unter_kritisch_Level}, Proposition~\ref{prop:MP_level} and Remark~\ref{remq:N4}, 
the functional $J_Q$ possesses a critical point $w \in L^{\crid}(\RN)$ of $J_{Q}$ which satisfies $J_Q(w)=L_Q$.
Setting $u = \mathbf{R}\bigl(Q^{\frac{1}{\crit}}w\bigr)$, we find that $u \in L^{\crit}(\RN)$ is a dual ground state 
of \eqref{eqn:nlh_critical}, and this concludes the proof.
\end{proof}

In dimension $N=3$, the situation completely changes. Indeed, the proof of Proposition~\ref{prop:MP_level} fails, since
the estimate in Lemma~\ref{lem:difference_fundsol}(i) now has the opposite sign. In fact, we have the following nonexistence result.
\begin{proposition}\label{prop:nonexist_N3}
Let $Q\in L^\infty(\R^3)\backslash\{0\}$ satisfy $Q(x)\geq 0$ for almost every $x\in\R^3$.
Then, there is no dual ground state for the problem
\begin{equation}\label{eqn:nlh_critical3}
-\Delta u -u=Q(x)|u|^{\crit-2}u, \quad u \in W^{2,\crit}(\R^3).
\end{equation}
\end{proposition}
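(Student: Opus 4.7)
The plan is to show that in dimension three one has $L_Q=L_Q^\ast$ but this common value is not attained. The starting observation is that the computation in~\eqref{eqn:Ngleich3} specializes to $\Psi(x)=\cos|x|/(4\pi|x|)$ for $N=3$, whence the pointwise bound $|\Psi(x)|\leq\Lambda(x)$ on $\R^3\setminus\{0\}$, with equality only on the null set $\{|x|\in\pi\Z\}$. This replaces Lemma~\ref{lem:difference_fundsol}(i) by a much sharper statement tailored to what follows.

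\textbf{Step 1 (lower bound, with strict inequality).}
For $v\in L^{\crid}(\R^3)$ and $g:=Q^{1/\crit}v$, combining the inequality $\int g\mathbf{R}g\leq\iint|g(x)||g(y)||\Psi(x-y)|\,dx\,dy$, the pointwise bound $|\Psi|\leq\Lambda$, and the Hardy-Littlewood-Sobolev inequality~\eqref{eqn:HLS_inequ} yields
\[
\int_{\R^3}v\mathbf{A}_Qv\,dx\;\leq\;\iint|g(x)||g(y)|\Lambda(x-y)\,dx\,dy\;\leq\;S^{-1}\|g\|_\crid^2\;\leq\;\|Q\|_\infty^{2/\crit}S^{-1}\|v\|_\crid^2,
\]
which, via~\eqref{eqn:MP_equivalent} and~\eqref{eqn:MP_crit_sobolev}, gives $L_Q\geq L_Q^\ast$. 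Moreover, if $g\not\equiv0$, then $\{(x,y):|g(x)g(y)|>0\}$ has positive $6$-dimensional measure, and since $\{z:|\Psi(z)|=\Lambda(z)\}$ is null in $\R^3$, a Fubini argument forces the middle estimate to be strict.

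\textbf{Step 2 (upper bound by concentration).}
For any $\delta>0$ the set $\{Q>\|Q\|_\infty-\delta\}$ has positive measure; I pick therein a Lebesgue point $x_\delta$ of $Q$, so that $Q(x_\delta)\geq\|Q\|_\infty-\delta$. Plug the translated dual instanton $\tilde v_\eta(x):=v_\eta(x-x_\delta)$ (see~\eqref{eqn:dual_inst}) into the characterization~\eqref{eqn:MP_equivalent}, split $\Psi=\Lambda+(\Psi-\Lambda)$, and rescale $x=x_\delta+\sqrt{\eta}\,\tilde x$. The Lebesgue-point property gives $Q^{1/\crit}(x_\delta+\sqrt{\eta}\,\cdot)\to Q(x_\delta)^{1/\crit}$ in $L^1_{\mathrm{loc}}$, so dominated convergence drives the $\Lambda$-contribution to $Q(x_\delta)^{2/\crit}S^{N/2}$ as $\eta\to 0^+$; meanwhile the $N=3$ bound $|(\Psi-\Lambda)(\sqrt{\eta}u)|\leq \sqrt{\eta}|u|/(8\pi)$ near the origin (from~\eqref{eqn:Ngleich3}) together with the $|\cdot|^{-1}$ decay at infinity makes the $(\Psi-\Lambda)$-correction an $o(1)$ quantity. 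Since $\|\tilde v_\eta\|_\crid^2=S^{(N+2)/2}$, one obtains $L_Q\leq S^{N/2}/(N(\|Q\|_\infty-\delta)^{(N-2)/2})$; sending $\delta\to 0^+$ yields $L_Q\leq L_Q^\ast$.

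\textbf{Contradiction and main obstacle.}
If $v$ is a dual ground state, then $v\neq0$, so $g=Q^{1/\crit}v\not\equiv0$, and $J_Q'(v)v=0$ gives $\|v\|_\crid^{\crid}=\int v\mathbf{A}_Qv$. The strict form of Step~1 then forces $\|v\|_\crid^{\crid}<\|Q\|_\infty^{2/\crit}S^{-1}\|v\|_\crid^2$, which after rearrangement and use of $\crid-2=-4/(N+2)$ gives $\|v\|_\crid^{\crid}>(S/\|Q\|_\infty^{2/\crit})^{N/2}$, i.e.\ $L_Q=\tfrac{1}{N}\|v\|_\crid^{\crid}>L_Q^\ast$, contradicting Step~2. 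The delicate point is Step~2: a general $Q\in L^\infty(\R^3)$ need not be continuous nor attain its essential supremum, and it is the Lebesgue-density choice of $x_\delta$ together with the iterated limit $\eta\to 0^+$ followed by $\delta\to 0^+$ that turns the concentration argument into a clean upper bound.
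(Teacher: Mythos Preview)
Your proof is correct and follows essentially the same strategy as the paper's: establish $L_Q\leq L_Q^\ast$ by concentrating dual instantons near a point where $Q$ is close to its essential supremum, and then exploit the pointwise bound $|\Psi|\leq\Lambda$ (valid for $N=3$ since $\Psi(x)=\cos|x|/(4\pi|x|)$) together with the Hardy--Littlewood--Sobolev inequality to rule out a minimizer.

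There is one noteworthy difference in the contradiction step. The paper, after deriving the equality chain, invokes the \emph{uniqueness of optimizers} in the Hardy--Littlewood--Sobolev inequality to conclude that $Q^{1/\crit}|v|$ is a translated dual instanton and hence strictly positive on all of $\R^3$; only then does the strict inequality $|\cos|x-y||<1$ on a set of positive measure produce the contradiction with~\eqref{eqn:equal_Helm_Lapl}. Your argument bypasses this: you observe directly that $\{z:|\Psi(z)|=\Lambda(z)\}=\{|z|\in\pi\Z\}$ is Lebesgue-null in $\R^3$, so for any nonzero $g\in L^{\crid}$ a Fubini argument gives the strict inequality $\iint|g(x)||g(y)||\Psi(x-y)|<\iint|g(x)||g(y)|\Lambda(x-y)$ without ever needing positivity of $g$ everywhere. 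This is a cleaner and more elementary route to the same conclusion. Your Step~2 is also somewhat more careful than the paper's in handling a merely measurable $Q$, by working at a Lebesgue point of $Q$ inside $\{Q>\|Q\|_\infty-\delta\}$ rather than asserting the existence of a suitable $x_0$ and cutoff without justification.
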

\begin{proof}
%
Consider the family of functions $v_{\eps}$, $\eps>0$, given in \eqref{eqn:dual_inst}.
Since $|\Psi(x)-\Lambda(x)|\to 0$ as $|x|\to 0$, we can find find for every $0<\delta<\|Q\|_\infty$, a point $x_0\in\R^N$ 
and cutoff function $\varphi\in \ceinftyc(\RN)$ such that $\varphi(x_0)=1$ and
$$
\liminf_{\eps\to 0^+}\int_{\R^3}\varphi v_{\eps}(\cdot-x_0)\mathbf{A}_Q\left(\varphi v_{\eps}(\cdot-x_0)\right)\dx \geq
 (\|Q\|_\infty-\delta)^{\frac2{\crit}} \int_{\R^3}v_1 \mathbf{R}_0 v_1\, dx = (\|Q\|_\infty-\delta)^{\frac2{\crit}} S^{-1} \|v_1\|_{\crid}^2.
$$
In addition, since $\|\varphi v_\eps(\cdot-x_0)\|_{\crid}\to \|v_1\|_{\crid}$, as $\eps\to 0^+$, the characterization \eqref{eqn:MP_equivalent} 
of $L_Q$ gives
$$
L_Q\leq \limsup_{\eps\to 0^+}\frac1N\left(\frac{\|\varphi v_\eps(\cdot-x_0)\|_{\crid}^2}{
\int_{\R^3}\varphi v_{\eps}(\cdot-x_0)\mathbf{A}_Q\left(\varphi v_{\eps}(\cdot-x_0)\right)\dx}\right)^{\frac{N}2}
\leq \frac{S^{\frac{N}2}}{N(\|Q\|_\infty-\delta)^{\frac{N+2}2}}.
$$
Since $\delta>0$ can be chosen arbitrarily small, we infer from \eqref{eqn:MP_crit_sobolev} that $L_Q\leq L_Q^\ast$.

Let us now assume, by contradiction, that $L_Q$ is achieved. In this case, there exists $v\in L^{\crid}(\R^3)$ such that
$\|v\|_\crid=1$ and
$$
\int_{\R^3}v\mathbf{A}_Qv\dx=(NL_Q)^{-\frac2N}.
$$
Since $L_Q\leq L_Q^\ast$ and recalling the value of $L_Q^\ast$ given in \eqref{eqn:MP_crit_sobolev}, we can write
\begin{align*}
S^{-1}\|Q\|_\infty^{\frac2\crit}
&=(NL_Q^\ast)^{-\frac2N}\leq (NL_Q)^{-\frac2N}=\int_{\R^3}v\mathbf{A}_Qv\dx\\ 
&\leq \int_{\R^3}Q^{\frac1\crit}|v| \left[|\Psi|\ast(Q^{\frac1\crit}|v|)\right]\dx
\leq \int_{\R^3}Q^{\frac1\crit}|v| \left[\Lambda\ast(Q^{\frac1\crit}|v|)\right]\dx\\
&= \int_{\R^3}Q^{\frac1\crit}|v| \mathbf{R}_0\left(Q^{\frac1\crit}|v|\right)\dx
\leq S^{-1}\|Q^{\frac1\crit}v\|_\crid^2
\leq S^{-1}\|Q\|_\infty^{\frac2\crit},
\end{align*}
using the fact that $|\Psi(z)|=\frac{|\cos|z|\, |}{4\pi|z|}\leq \frac1{4\pi|z|}=\Lambda(z)$ for all $z\in\R^N$, and
the Hardy-Littlewood-Sobolev inequality.
As a consequence, all inequalities are equalities and we find $L_Q=L_Q^\ast$ and obtain the following identities.
\begin{align}
 \int_{\R^3}Q^{\frac1\crit}|v| \mathbf{R}_0\left(Q^{\frac1\crit}|v|\right)\dx
 &= S^{-1}\|Q^{\frac1\crit}v\|_\crid^2, \label{eqn:equal_HLS}\\
 \int_{\R^3}Q^{\frac1\crit}|v| \left[|\Psi|\ast(Q^{\frac1\crit}|v|)\right]\dx
&= \int_{\R^3}Q^{\frac1\crit}|v| \left[\Lambda\ast(Q^{\frac1\crit}|v|)\right]\dx.\label{eqn:equal_Helm_Lapl}
\end{align}
From \eqref{eqn:equal_HLS} and the uniqueness of the optimizers for the Hardy-Littlewood-Sobolev inequality \cite{Lieb83,Lieb_Loss2001},
we deduce that
$$
Q^{\frac1\crit}|v|=\gamma v_\eps(\cdot-x_0), \quad\text{for some }\gamma, \eps>0\text{ and }x_0\in\R^N,
$$
where $v_\eps$ is given by \eqref{eqn:dual_inst}.
In particular, $Q^{\frac1\crit}|v|>0$ everywhere in $\R^3$, and we obtain
\begin{align*}
\int_{\R^3}Q^{\frac1\crit}|v| \left[|\Psi|\ast(Q^{\frac1\crit}|v|)\right]\dx
&=\int_{\R^3}\int_{\R^3}Q^{\frac1\crit}(x)|v(x)|Q^{\frac1\crit}(y)|v(y)|\frac{\bigl|\cos|x-y|\, \bigr|}{4\pi|x-y|}\dx\dy\\
&<\int_{\R^3}\int_{\R^3}Q^{\frac1\crit}(x)|v(x)|Q^{\frac1\crit}(y)|v(y)|\frac{1}{4\pi|x-y|}\dx\dy\\
&=\int_{\R^3}Q^{\frac1\crit}|v| \left[\Lambda\ast(Q^{\frac1\crit}|v|)\right]\dx.
\end{align*}
This contradicts \eqref{eqn:equal_Helm_Lapl} and therefore shows that $L_Q$ is not achieved.
In particular, $J_Q$ does not have any critical point at level $L_Q$, and thus no dual ground state solution 
of \eqref{eqn:nlh_critical3} can exist.
\end{proof}

\section*{Acknowledgments}
The authors would like to thank Tobias Weth for suggesting the problem studied here and for stimulating discussions.
This research was partially supported by the {\em Deutsche Forschungsgemeinschaft (DFG)} through the project WE 2821/5-2.

\bibliographystyle{siam}
\bibliography{literatur1}

\end{document}